\definecolor{cadmiumgreen}{rgb}{0.0, 0.42, 0.24}
\definecolor{hotpink}{rgb}{1.0, 0.41, 0.71}
\definecolor{cgreen}{RGB}{0, 180, 100}
\crefname{algocf}{Algorithm}{Algorithms}
\tikzset{annot/.style={draw=black,fill=white,text=black}}
\pgfplotsset{
  compat=newest, % Allows drawing of circles.
  tinyticks/.style={
every axis/.append style={font=\scriptsize},
  }
}
\newcommand{\R}{\mathbb{R}}
\newcommand{\N}{\mathbb{N}}
\newcommand{\X}{\mathcal{X}}
\newcommand{\LS}{\mathcal{S}}
\newcommand{\budget}{\varepsilon}
\newcommand{\xadv}{\bar{x}}
\newcommand{\pert}{\Delta}
\newcommand{\dx}{\Delta}
\newcommand{\sign}{\operatorname{sign}}
\newcommand{\cidx}{\kappa}
\newcommand{\net}{h}
\newcommand{\obj}{f}
\newcommand{\denseNES}{\pi}
\newcommand{\pnabla}{\overset{\text{\tiny{ES}}}{\nabla}}
\newcommand{\abs}[1]{\left|#1\right|}
\newcommand{\con}{\mathbf{c}}
\newcommand{\en}[1]{\uppercase{#1}}
\DeclareMathOperator{\argmax}{arg\ max}
\DeclareMathOperator*{\argmin}{arg\,min}
\newcommand{\st}{\,:\,}
\newcommand{\de}{\,\mathrm{d}}
\newtheorem{theorem}{Theorem}[section]
\newtheorem{proposition}[theorem]{Proposition}
\newtheorem{remark}[theorem]{Remark}
\definecolor{mediumelectricblue}{rgb}{0.21, 0.61, 0.99}
\title{Consensus-based optimization for closed-box adversarial attacks and a connection to evolution strategies}
\author[1]{Tim Roith\thanks{Corresponding author: \href{mailto:tim.roith@desy.de}{tim.roith@desy.de}}}
\author[2]{Leon Bungert}
\author[3]{Philipp Wacker}
\affil[1]{Helmholtz Imaging, Deutsches Elektronen-Synchrotron DESY, Notkestr. 85, 22607 Hamburg, Germany}
\affil[2]{Institute of Mathematics, Center for Artificial Intelligence and Data Science (CAIDAS), University of Würzburg, Emil-Fischer-Str. 40, 97074 Würzburg, Germany}
\affil[3]{School of Mathematics and Statistics, University of Canterbury}
\date{\today}
\begin{document}

\maketitle
\begin{abstract}

Consensus-based optimization (CBO) has established itself as an efficient gradient-free optimization scheme, with attractive mathematical properties, such as mean-field convergence results for non-convex loss functions. 
In this work, we study CBO in the context of closed-box adversarial attacks, which are imperceptible input perturbations that aim to fool a classifier, without accessing its gradient. Our contribution is to establish a connection between the so-called consensus hopping as introduced by \citeauthor{riedl2023gradient} and natural evolution strategies (NES) commonly applied in the context of adversarial attacks and to rigorously relate both methods to gradient-based optimization schemes. 
Beyond that, we provide a comprehensive experimental study that shows that despite the conceptual similarities, CBO can outperform NES and other evolutionary strategies in certain scenarios. 

\par\vskip\baselineskip\noindent
\textbf{Keywords}: adversarial attacks, closed-box attacks, consensus-based optimization, gradient-free optimization\\
\textbf{AMS Subject Classification}: 65K10, 68Q32, 65K15, 90C26
\end{abstract}
\section{Introduction}

% \begin{figure}
% \begin{subfigure}[t]{.32\textwidth}
% \includegraphics[width=\textwidth,trim={1.cm 1cm 1cm 1.cm}, clip]{images/standard/x.png}
% \caption{Classified as \enquote{African Elephant}.}
% \end{subfigure}\hfill%
% \begin{subfigure}[t]{.32\textwidth}
% \includegraphics[width=\textwidth,trim={1.cm 1cm 1cm 1.cm}, clip]{images/standard/delta.png}
% \caption{Perturbation $\pert{}$, with $\norm{\pert{}}_\infty \leq 0.05$.}
% \end{subfigure}\hfill%
% \begin{subfigure}[t]{.32\textwidth}
% \includegraphics[width=\textwidth,trim={1.cm 1cm 1cm 1.cm}, clip]{images/standard/xadv.png}
% \caption{Classified as \enquote{Mergus serrator}.}
% \end{subfigure}\hfill%
% %
% \caption{Example of an adversarial attack computed with CBO. The image is taken from the ImageNet dataset \cite{deng2009imagenet}, the classifier employs the Inception V3 architecture \cite{szegedy2016rethinking}.
% % funny dog : 106
% % horse: 222
% }
% \end{figure}

\subsection{Adversarial attacks and defenses}

We consider a classification task of assigning a label $\cidx\in\{1,\ldots,K\}, K\in\N$ to inputs $x\in \mathcal{X}$. We employ a classifier $\net:\mathcal{X}\to\Delta^K$, where $\Delta^K$ denotes the $K$-dimensional unit simplex, and obtain the classification via the maximum likelihood estimator, $\net^{\text{MLE}}(x) := \argmax_{\cidx=1,\ldots,K} \net(x)_\cidx$. Given an input $x\in\mathcal{X}$ and its true label $\cidx_x\in\{1,\ldots,K\}$, the input $\xadv\in\X$ is called an \textit{adversarial example} if
\begin{align*}
\net^{\text{MLE}}(\xadv) \neq \cidx_x\qquad\text{and}\qquad \xadv \text{ is \enquote{similar} to } x.
\end{align*}
The requirement of $\xadv$ being similar to $x$ merely serves as an intuition, where originally the authors in \cite{szegedy2013intriguing,goodfellow2014explaining} demand that the difference between $\xadv$ and $x$ is imperceptible to the human eye. 
%In our work, we rather employ the intuition that a human would acknowledge $\xadv$ as a perturbation of $x$, that preserved its original class. 
In practice, one cannot rely on human perception and instead chooses a function $\phi:\X\times\X\to[0,\infty)$ that indicates the similarity between two images.  In the following, we focus on cases where $\phi$ is induced by some $\ell^p$-norm, i.e., $\phi=\abs{\cdot-\cdot}_p$ with $p\geq 1$. 
Letting $\budget>0$ denote the so-called adversarial budget, one formulates the following attack problem,
\begin{align}\label{eq:adv}\tag{AdvAtt}
\max_{\xadv \st \phi(\xadv, x)\leq \varepsilon} \ell(\net(\xadv), \cidx_x).
\end{align}
This procedure is called an \textit{untargeted} attack, which means that we are not interested in the specific output of $\net^\text{MLE}_\theta(\xadv)$ as long as it is different from $\cidx_x$. This is opposed to targeted attacks, where we pick a fixed label $\bar{\cidx} \neq \cidx_x$ and then want to find $\xadv$ such that $\net^\text{MLE}(\xadv) = \bar{\cidx}$. The resulting optimization problem is very similar to \labelcref{eq:adv}, only exchanging $\ell(\net(\xadv),\kappa_x)$ with $-\ell(\net(\xadv), \bar \kappa)$.

With the goal of making classifiers robust against adversarial attacks, many approaches have been considered in the literature. 
A conceptually straightforward approach is to limit the Lipschitz constant of neural networks with the goal of achieving certifiable robustness guarantees, see, e.g.,~\cite{finlay2019lipschitz,fazlyab2023certified,huang2021training,bungert2021clip,zhang2022rethinking,anil2019sorting}.
However, restricting the Lipschitz constant often leads to a high degree of conservatism, and consequently to poor accuracy on unperturbed data. The most prominent approach to achieve adversarial robustness is called adversarial training \cite{goodfellow2014explaining,madry2017towards} and boils down to augmenting training data with adversarial attacks of the current classifier.
While the algorithm itself is now a decade old, the theoretical understanding of adversarial understanding has only taken off recently, e.g., with existence proofs~\cite{awasthi2023existenceadversarialbayesclassifier,bungert2021clip}, relations to optimal transport~\cite{trillos2023multimarginal,trillos2024optimal}, and study of its asymptotic regularization~\cite{bungert2024gamma,bungert2024mean}.

%
% \begin{align}
% \max_{\xadv: \phi(\xadv, x)\leq \varepsilon} -\ell(f_\theta(\xadv), \bar c^{\text{oh}}) = \min_{\xadv: \phi(\xadv, x)\leq \varepsilon} \ell(f_\theta(\xadv), \bar c^{\text{oh}}).
% \end{align}

\subsection{Finding adversarial examples}

The first studies concerning adversarial attacks \cite{goodfellow2014explaining,szegedy2013intriguing} proposed the so-called fast gradient sign method that only involves a single optimization step,
\begin{align}\label{eq: FGSM}\tag{FGSM}
\xadv= x+\budget\, \sign(\nabla_x \ell(\net(x), \cidx_x)).
\end{align}
This one-step attack respects the budget constraint $\phi(\xadv,x)\leq \budget$ if $\phi=\abs{\cdot-\cdot}_\infty$ equals the $\ell^\infty$ metric, however, as pointed out in \cite{weigand2024adversarialflowsgradientflow} other gradient normalizations $N:\R^d\to B^p_1$ can be employed instead of the $\sign$ operation. The set $B^p_r(x)$ denotes the closed $\ell^p$ ball around $x$ with radius $r$, and we define $B^p_r := B^p_r(0)$. One-step attacks are especially attractive for adversarial training, since every training datum is attacked in every epoch. However, when we are only interested in the attack problem, it is also feasible to consider more than one iteration. In this scenario, we additionally have to ensure that the iterates $x^{(k)}$ still fulfill the constraint $x^{(k)}\in B_\budget^p(x)\cap\X$ for every $k\in \N$. A possible way to enforce this, is by considering \textit{projected} gradient descent
\begin{align*}
x_{k+1} = \Pi_{B_\budget^p(x)}\left(x_{k} + \tau\, N(\nabla_x \ell(\net(x^{(k)}), \cidx_x))\right),
\end{align*}
where $\Pi_{B_\budget^p(x)}$ denotes the orthogonal projection onto the set $B_\budget^p(x)$. For $\ell^\infty$ attacks, one can alternatively employ an explicit parameterization, as in \cite{carlini2017towards}, using the fact that $B_\budget^\infty(x) = \{x + \budget \tanh(w): w\in\R^d\}$, where the application of the hyperbolic tangent is to be understood componentwise on $\R^d$. 
%This then yields an unconstrained optimization problem in the variable $w$, 
%
% \begin{align}\label{eq:CW}
% \max_{w\in\R^d} \ell(f(x + \budget\, \tanh(w), c_x).
% \end{align}
%
%\todo{TR: maybe remark on the mirror descent interpretation}

\paragraph{Closed-box attacks}

So far, we described methods to construct adversarial examples that employ the gradient $\nabla_x \ell(f(x),y)$. Such methods are also referred to as open- or white-box attacks\footnote{In the literature, the terminology white- vs. black-box optimization is more common. In this paper, we use the more appropriate terms open- and closed-box.}, since the computation of the mentioned gradient requires full knowledge on the network's architecture and weights. Often, however, this knowledge cannot be assumed, which then yields the scenario of \emph{closed-box} attacks, where we refer to \cite{papernot2017practical} for one of the first studies in this direction. %
In our setting, we assume only access to the full output $\net(x)$ for any input $x$. Note that we do not consider the more restrictive cases, where only $\net^\text{MLE}(x)$ is known, which are called \emph{decision-based} attacks.  Moreover, our work falls into the category of \textit{query-based} attacks, i.e., we employ the output of the original neural network to solve \labelcref{eq:adv}. This is opposed to \textit{transfer-based} attacks that produce attacks against a given model (typically in an open-box fashion) which should then also be a valid attack against a closed-box model. %
We refer to \cite{suya2024sok} for a more detailed taxonomy of different attack scenarios.
In the following, we only review the most important references for the present work. For exhaustive literature reviews and surveys, we refer to \cite{Zheng2023,mahmood2021back,Wang2022,bhambri2019survey}. Furthermore, in \cref{tab:lowresmore} we comment on more works in this direction which are not directly related to our setup.
\paragraph{Attacks based on evolution strategies} 
Evolution strategies (ES) employ a population of particles exploring the state space and use function evaluations for the update of the population, with the goal of contracting the population in regions of optimality. One of the first studies in this direction was given in \cite{rechenberg1978evolutionsstrategien}, which lead to the development of the so-called \emph{(1 + 1)-ES}, where in each iteration a new candidate solution is produced by randomly perturbing the current iterate. This candidate is then made the new iterate if it improves with respect to some objective function. Further important developments in this line of optimization literature involve the \emph{Covariance matrix adaptation evolution strategy} (CMA-ES) \cite{hansen2001completely} and natural evolution strategies \cite{wierstra2014natural,glasmachers2010exponential,salimans2017evolution}. We refer to \cite{slowik2020evolutionary} for an overview of this topic. %
The application of ES to adversarial attacks was first studied in \cite{ilyas2018black} for the so-called \emph{Natural evolution strategy} (NES), and further extended to other schemes in \cite{meunier2019yet,ilie2021evoba,qiu2021black}. Regarding adversarial NES, we remark that variations of the original formulation in \cite{ilyas2018black} were given in \cite{li2019nattack}, but we do not consider these here. It should also be noted that even earlier, before \cite{ilyas2018black}, the authors in \cite{sharif2016accessorize} employed particle swarm optimization (PSO), another ensemble method, in their setup to obtain adversarial attacks. The use of PSO for adversarial attacks was further studied in \cite{bhagoji2017exploring}.

\subsection{Contribution and outline}

The goal of this paper is to analyze the connection between the natural evolution strategy (NES) and consensus-based optimization (CBO). 
The main insight here is that so-called Consensus Hopping (CH) as introduced in \cite{riedl2023gradient}, which is a derivative of the original CBO algorithm, is very similar to a class of evolution strategies. 
Beyond the connection of CH and NES, we also examine the performance of the original CBO algorithm for the adversarial attack problem. 
Here, our main insight is that CBO outperforms comparable algorithms in \enquote{easier} attack scenarios, including targeted attacks on the CIFAR-10 dataset \cite{krizhevsky2009learning} and untargeted attacks on the ImageNet dataset \cite{deng2009imagenet}. 
However, for harder settings, e.g., targeted attacks on ImageNet, CBO performs the worst out of the considered schemes while NES and CH are on par. This observation is underlined by experiments on different attack types and scenarios.

While the different schemes are formally introduced in \cref{sec:CBOvsNES}, however, here we briefly outline the intuition and key differences:
\paragraph{From CBO to CH} CBO takes an ensemble view, where the evolution of particles towards a minimizer of an objective function is considered. In contrast to that, CH can be expressed as the evolution of a single point, around which new particles are sampled in every step. Based on the derivation of \cite{riedl2023gradient} we show in \cref{sec:CBOvsNES} how CBO needs to be modified to obtain CH:
\begin{enumerate}
\item CBO allows the particles to drift towards the \labelcref{eq:con} with a certain speed. By choosing this speed to be infinity, the particles directly \enquote{hop} onto the consensus point.
\item The noise scale of each particle in CBO depends on its distance to the consensus point. CH replaces this with a fixed scale for all particles.
\end{enumerate}
\paragraph{From CH to NES}
In NES the gradient is approximated by taking a weighted mean of samples around the current estimate. In CH, this is done very similarly, but the objective function $f$ is replaced by an exponential rescaling and the mean is replaced by a normalized sum. However, the numerical examples in \cref{sec:num} show that this only leads to minor differences in the performance of NES and CH.
We underline this empirical observation with a theoretical result, proving that in the regime of infinitely many Gaussian samples, both schemes approximate the gradient of the objective with comparable error terms.
\vspace{1em}

\section{Consensus based optimization and evolution strategies}
In this section, we first introduce consensus based optimization. Further, using the concept of consensus hopping as introduced in \cite{riedl2023gradient}, we establish a connection between evolution strategies and CBO.

\subsection{Consensus based optimization}
Consensus-based optimization (CBO) was introduced in \cite{Pinnau2017} as a derivative-free scheme for global optimization. 
With this aim of minimizing an objective function $\obj:\R^d\to\R$ one solves the following stochastic differential equation (SDE) for an ensemble of particles $X=(x^{(1)},\ldots, x^{(N)})$: 
\begin{align}\label{eq:SDE}
\de x^{(n)} = -\left(x^{(n)} - \con(X)\right)\de t + \sigma\abs{x^{(n)} - \con(X)}\de W^{(n)}.
%, \qquad\rho := \frac{1}{N}\sum_{n=1}^N \delta_{x^{(n)}}.
\end{align}
Here, $(W^{(1)},\dots,W^{(N)})$ are independent Brownian motions, and the consensus point $\con(X)$ with respect to the ensemble $X$ is defined as 
%
%and for a measure $\rho$, the consensus point $\con(\rho)$ is defined as 
% %
% \begin{align}\label{eq:con}\tag{Consensus Point}
% \con(\rho) 
% &:= 
% \frac{\int y \exp(-\alpha \obj(y))\de\rho(y)}{\int \exp(-\alpha \obj(y))\de\rho(y)},
% \end{align}
% %
% where $\alpha>0$ is an inverse heat parameter. 
% Note that for the empirical measure in \labelcref{eq:SDE}, to make the dependence clearer, we write $\con(X)$ in place of $\con(\rho)$.
% In this case, we have
%
\begin{align}\label{eq:con}\tag{Consensus Point}
\con(X) := \frac{\sum_{n=1}^N x^{(n)} \exp(-\alpha \obj(x^{(n)}))}{\sum_{n=1}^N \exp(-\alpha \obj(x^{(n)}))}.
\end{align}
This quantity is an ensemble average, weighted by the particles' respective objective function, with lower-objective particles carrying a larger weight.  
We can then see that the dynamics of \labelcref{eq:SDE} consist of deterministic contraction of all particles to their joint weighted mean, and additional distance-weighted explorative random perturbations. The mean-field version of the SDE in \labelcref{eq:SDE} allows to study the convergence behavior under mild assumption on the objective function $\obj$, see e.g. \cite{carrillo2018analyticalframeworkconsensusbasedglobal,fornasier2024consensus,fornasier2022convergence,grassi2023mean,riedl2024leveraging,riedl2024mathematical,wang2025mathematical,bae2022constrained,fornasier2025regularity}. For a convergence study in the finite-particle regime, we refer to \cite{ko2022convergence,byeon2024discrete,bellavia2025discrete}. 
The time-discrete Euler--Maruyama scheme for \labelcref{eq:SDE} with time step size $\tau>0$ is given by 
\begin{align}\label{eq:discr_CBO}\tag{CBO}
x^{(n)}_{k+1} &= 
x^{(n)}_k
-\tau\lambda(x_k^{(n)} - \con(X_k)) + \sqrt{\tau}\sigma \abs{x^{(n)}_k-\con(X_k)}\xi_k^{(n)},\qquad \xi_k^{(n)} \sim \mathcal{N}(0,\mathbb I_{d\times d}),
\end{align}
where by $X_k:=(x_k^{(1)},\dots,x_k^{(N)})$ we denote the particle ensemble in the $k$-th iteration of the scheme.
Many extensions of this scheme for different application scenarios have been proposed, including constrained optimization \cite{beddrich2024constrained,CBO_reg_giacomo,CBO_sphere2,Fornasier2020ConsensusbasedOO,CBO_reg_jose,carrillo2024interacting,herty2025micro}, sparse optimization and mirror descent \cite{bungert2025mirrorcbo}, bilevel optimization \cite{Garcia_Trillos2025-ix,trillos2024cb}, stochastic optimization \cite{bonandin2024consensus}, saddle point problems \cite{borghi2024particle,huang2024consensus} and problems with multiple global minima \cite{bungert2024polarized,fornasier2024pde}. Furthermore, it was also extended to the sampling of uni- and multimodal distributions in \cite{carrillo2022consensus,bungert2024polarized}, respectively. 
Notably, so far, CBO has been used in the context of adversarial attacks and defenses only once \cite{Garcia_Trillos2025-ix}, for solving an adversarially robust federated learning task.
Part of the popularity of CBO-based methods is grounded on the fact that they are backed by a rich mathematical convergence theory for non-convex optimization, based on mean-field limits and numerical methods for stochastic differential equations, see~\cite{fornasier2024consensus,carrillo2018analyticalframeworkconsensusbasedglobal} and many of the references above.

\subsection{Natural evolution strategies}

A popular method to solve \labelcref{eq:adv} in the closed-box scenario is the so-called natural evolution strategy (NES) \cite{wierstra2014natural,ilyas2018black}. One aims to maximize a fitness function $\obj:\R^d\to\R$. However, instead of directly optimizing $\obj$, NES instead maximizes the expected value with respect to a parametrized probability density $\denseNES(\cdot; \theta):\R^d\to\R$:
\begin{align*}
\max_\theta J(\theta) := \max_\theta \int \obj(z) \denseNES(z; \theta)\de z.
\end{align*}
Employing the \enquote{log-likelihood trick} the authors in \cite{wierstra2014natural} first calculate
\begin{align*}
\nabla_\theta J(\theta) 
=
\int \obj(z) \nabla_\theta \log \denseNES(z;\theta)\, \denseNES(z;\theta)\de z
\end{align*}
and propose the gradient approximation
\begin{align}\label{eq:NESGrad}
\nabla_\theta J(\theta) \approx \pnabla J(\theta; Z) := 
\frac{1}{N} \sum_{n=1}^N \obj(z^{(n)}) \nabla_\theta \log \denseNES(z^{(n)};\theta), 
\end{align}
where each point in the ensemble $Z=(z^{(1)},\ldots, z^{(N)})$ is distributed according to $\denseNES(\cdot, \theta)$. This is related to Stein's lemma, and is also used in the context of Stein variational gradient descent \cite{liu2016stein}. In order to achieve invariance with respect to the concrete parameterization, one considers the \textit{natural gradient} \cite{amari1998natural},
\begin{align*}
\tilde{\nabla}_\theta J = F(\theta)^{-1} \nabla_\theta J,
\end{align*}
where 
%$F(\theta)$\pw{$ = \int (\partial_\theta \log \denseNES(x;\theta))^2 \mathrm{d}\denseNES(x;\theta)$}\todo{this is not a matrix?} 
$F(\theta) := \int\nabla_\theta \log \denseNES(z,\theta)\nabla_\theta \log \denseNES(z,\theta)^\top\denseNES(z;\theta)\de z$
denotes the Fisher information matrix. In \cite{wierstra2014natural}, this is then used to formulate the NES update,
\begin{align}\tag{NES}\label{eq:NES}
\left.
\begin{aligned}
Z&=(z^{(1)},\ldots,z^{(N)})\quad\text{with}\quad z^{(n)} \sim \denseNES(\cdot, \theta_k),\\
\theta_{k+1} &= \theta_k + \eta F(\theta_k)^{-1} \pnabla J(Z;\theta).
\end{aligned}\quad\right\}
\end{align}
If the Fisher information matrix is not available analytically, it can be approximated similarly to the gradient using samples $Z=(z^{(1)},\dots,z^{(N)})$ distributed along according to $\pi(\cdot;\theta)$ via
\begin{align*}
    F(\theta) \approx 
    {F}(\theta;Z) := \frac{1}{N}
    \sum_{n=1}^N
    \nabla_\theta \log \denseNES(z^{(n)};\theta)
    \nabla_\theta \log \denseNES(z^{(n)};\theta)^\top.
\end{align*}
For adversarial attacks, it was proposed in \cite{ilyas2018black} to choose a Gaussian parameterization $\pi(\cdot,\theta)\equiv\mathcal{N}(\mu,\sigma^2\mathbb{I}_{d\times d})$ with fixed covariance $\sigma^2\mathbb{I}_{d\times d}$, and only optimize over the means, i.e., $\theta=\mu\in\R^d$. In this case, one calculates 
\begin{gather*}
\nabla_\mu \log \denseNES(z;\mu) = \frac{1}{\sigma^2}(z - \mu),\qquad
F= \frac{1}{\sigma^2} \mathbb{I}_{d\times d}\\
\Rightarrow 
\tilde{\nabla}_\mu J(\mu) \approx
\frac{1}{N} \sum_{n=1}^N \obj(z^{(n)}) (z^{(n)} - \mu) = 
\frac{\sigma}{N} \sum_{n=1}^N \obj(\mu + \sigma \xi^{(n)})\, \xi^{(n)},\qquad \xi^{(n)}\sim\mathcal{N}(0,\mathbb{I}_{d\times d}) 
\end{gather*}
which can be directly plugged into \labelcref{eq:NES}, to obtain the following update 
\begin{align}\tag{Gaussian NES}\label{eq:GNES}
\mu_{k+1} &= \mu_k + \frac{\eta \sigma}{N} 
\sum_{n=1}^N \obj(\mu_k + \sigma \xi_k^{(n)})\, \xi_k^{(n)}\quad \xi_k^{(n)} \sim \mathcal{N}(0, \mathbb{I}_{d\times d}).
\end{align}
Note that in \cite{ilyas2018black} the authors did not use the natural gradient and hence arrive at a different scaling with $\frac{\eta}{\sigma N}$ instead of $\frac{\eta\sigma}{N}$ in front of the sum, which obviously only changes the step size $\eta$.

In the following, we relate Gaussian NES to consensus-based optimization via consensus hopping introduced in \cite{riedl2023gradient}. We remark that \cite{wierstra2014natural} already established a connection between NES and the Covariance Matrix Adaptation Evolution Strategy (CMA-ES)~\cite{hansen2001completely}.

\subsection{Consensus hopping and NES interpretation}\label{sec:CBOvsNES}

In order to relate \labelcref{eq:discr_CBO} to \labelcref{eq:GNES}, we consider the so-called \enquote{the infinite drift speed limit} of CBO, which is obtained by choosing $\lambda=\frac{1}{\tau}$, yielding the iteration
\begin{align*}
x^{(n)}_{k+1} 
&= 
\con(X_k) + \sqrt{\tau}\sigma \abs{x_k^{(n)}-\con(X_k)}\xi_k^{(n)},\qquad \xi_k^{(n)} \sim \mathcal{N}(0,\mathbb{I}_{d\times d}),
\end{align*}
or equivalently
\begin{align}\label{eq:discr_CBO_tau=1}
x_{k+1}^{(n)} &\sim \mathcal{N}\left(\con(X_k), \tau\sigma^2\abs{x^{(n)}_k-\con(X_k)}^2\mathbb{I}_{d\times d}\right).
\end{align}
The iteration \labelcref{eq:discr_CBO_tau=1} is very similar to the consensus hopping scheme studied in \cite{riedl2023gradient} which takes the form 
\begin{align}\label{eq:gradient_hopping}
x_{k+1}^{(n)} &\sim \mathcal{N}\left(\con(X_k), \tilde\sigma^2\mathbb{I}_{d\times d}\right)
\qquad
%\Leftrightarrow
\iff
\qquad x_{k+1}^{(n)} = \con(X_k) + \tilde{\sigma} \xi^{(n)}_k,\qquad \xi_k^{(n)} \sim \mathcal{N}(0,\mathbb{I}_{d\times d}),
\end{align}
where the natural scaling of the noise variance is $\tilde\sigma=\sqrt{\tau}\sigma$. 
In particular, the only difference between \labelcref{eq:discr_CBO_tau=1,eq:gradient_hopping} is the scaling of the noise which is uniform and independent of $n$ in \labelcref{eq:gradient_hopping}. We can rewrite the consensus hopping scheme in terms of the consensus point as follows
\begin{align}
\con(X_{k+1})
&=\notag
\frac{\sum_{n=1}^N \exp(-\alpha \obj(x_{k+1}^{(n)}))\, x_{k+1}^{(n)}}{\sum_{n=1}^N \exp(-\alpha \obj(x_{k+1}^{(n)}))}
\\
&=\notag
\frac{\sum_{n=1}^N \exp(-\alpha \obj(\con(X_k)+\tilde\sigma\xi_k^{(n)}))\,(\con(X_k)+\tilde\sigma\xi_k^{(n)})}{\sum_{n=1}^N \exp(-\alpha \obj(\con(X_k)+\tilde\sigma\xi_k^{(n)}))}
\\
\tag{Consensus Hopping}\label{eq:CHop}
&=
\con(X_k)
+
\tilde\sigma 
\frac{\sum_{n=1}^N \exp(-\alpha \obj(\con(X_k)+\tilde\sigma\xi_k^{(n)}))\xi_k^{(n)}}{\sum_{n=1}^N \exp(-\alpha \obj(\con(X_k)+\tilde\sigma\xi_k^{(n)}))}.
% &=:
% \con(X_k) + \tilde\sigma\cnabla \obj(\con(X_k); \Xi_k),
\end{align}
%
% with $\Xi_k = (\xi_k^{(1)}, \ldots, \xi_k^{(N)})$.
%
%\todo[LB,inline]{Note that in \cite{riedl2023gradient} it was proved that \labelcref{eq:discr_CBO_tau=1,eq:gradient_hopping} are close up to an error of order $\tau\sigma^2 + \tilde\sigma^2$.
% Hence, $\tilde\sigma$ should scale like $\sqrt{\tau}\sigma$.
% Then as $\tau\to 0$ NES and GH scale differently.}
%
This iteration exhibits strong similarities to the \labelcref{eq:GNES} method and here we list the minor differences:
\begin{itemize}
    \item The objective function $f$ in Gaussian NES, which is a maximization method, is replaced by $\exp(-\alpha f)$ consensus hopping, which is a minimization method. \cref{prop:comparison} below shows how this leads to a similar time-stepping scheme based on an empirical gradient approximation.
    The minimizers of $\exp(-\alpha f)$ are exactly the maximizers of $f$. 
    \item Gaussian NES approximates the gradient at a point by the \emph{mean value} of weighted samples around that point.
    Consensus hopping approximates it by a \emph{convex combination} of such weighted samples.
    \item Gaussian NES depends on learning rate $\eta$ and a sample variance $\sigma^2$ whereas consensus hopping depends on an inverse temperature parameter $\alpha$ and a sample variance $\tilde\sigma^2$.
\end{itemize}
% \lb 
% Note that consensus hopping or NES with fixed variance $\sigma$ don't generically converge since otherwise $\sum_{i=1}^N\mathcal{E}(x_*+\sigma\xi^i)\xi^i=0$ would have to hold which requires $\mathcal{E}$ to be perfectly symmetric around $x^*$.
% They are sampling methods.
% Hence, for optimization one has to rescale the variance with the step size: $\tilde\sigma=\sqrt{\tau}\sigma$ for consensus hopping, and $\hat\sigma=\sqrt{\tau}\sigma$ for NES which then yields the same scaling. 
% \nc 
In the mean-field regime where $N\to \infty$ and for smooth objective functions $f$, we can prove that the two methods are approximations of gradient ascent / descent, respectively,  if the free parameters $\eta,\sigma^2,\alpha$ and $\tilde\sigma^2$ are scaled appropriately.
In \labelcref{eq:GNES} we can use Stein's lemma to argue that
\begin{align*}
    \mathbb E\left[
    \frac{1}{N} 
\sum_{n=1}^N \obj(\mu + \sigma \xi^{(n)})\, \xi^{(n)}
    \right]
    =
    \mathbb E\left[
    \obj(\mu + \sigma \xi)\,\xi
    \right]
    =
    \sigma
    \mathbb E\left[
    \nabla\obj(\mu+\sigma\xi)
    \right]
\end{align*}
and so a simple Taylor approximation should do the trick.
In fact our proof below does not even use Stein's lemma.
For \labelcref{eq:CHop}, since the update is a quotient of random variables, we cannot compute its expected values directly. 
Instead, by the law of large numbers the limit of the expected value as $N\to\infty$ is the expression
\begin{align*}
    \frac{\mathbb E\left[\exp\left(-\alpha f(c+\tilde\sigma\xi)\right)\xi\right]}{\mathbb E\left[\exp\left(-\alpha f(c+\tilde\sigma\xi)\right)\right]}
    =
    \frac{\int\exp\left(-\alpha f(c+\tilde\sigma\xi)\right)\xi\de\pi(\xi)}{\int\exp\left(-\alpha f(c+\tilde\sigma\xi)\right)\de\pi(\xi)}
\end{align*}
for which we can also use Taylor approximations to relate it to the negative gradient of $f$.

\begin{proposition}\label{prop:comparison}
    If $f\in C^2(\R^d)$ has bounded second derivative, and if $\pi$ is the probability density function of $\mathcal{N}(0,\mathbb{I}_{d\times d})$, then if we scale $\sigma^2:=\frac{\tau}{\eta}$ for some $\tau>0$ it holds
    \begin{align*}
        \eta\sigma\int f(\mu + \sigma\xi)\xi \de\pi(\xi) 
        =
        \tau\nabla f(\mu) + O\left(\sqrt{\frac{\tau^3}{\eta}}\right).
    \end{align*}
    If $f$, in addition, has a bounded first derivative, and if we scale $\tilde\sigma^2:=\frac{\tau}{\alpha}$ then it holds
    \begin{align*} 
        \tilde\sigma
        \frac{\int\exp\left(-\alpha f(c+\tilde\sigma\xi)\right)\xi\de\pi(\xi)}{\int\exp\left(-\alpha f(c+\tilde\sigma\xi)\right)\de\pi(\xi)}
        =
        -\tau\nabla f(c) + O\left(\sqrt{\tau^3\alpha}\right).
    \end{align*}
\end{proposition}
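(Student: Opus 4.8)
The plan is to handle both displays with the same first-order Taylor device, the only genuine extra difficulty being the quotient structure in the second identity. For the first identity I would expand $f$ to first order around $\mu$, writing $f(\mu+\sigma\xi)=f(\mu)+\sigma\nabla f(\mu)^\top\xi+R(\xi)$ with integral remainder $R(\xi)=\sigma^2\int_0^1(1-t)\,\xi^\top\nabla^2 f(\mu+t\sigma\xi)\,\xi\de t$, so that $\abs{R(\xi)}\le\tfrac{M}{2}\sigma^2\norm{\xi}^2$ with $M:=\sup\norm{\nabla^2 f}$. Multiplying by $\xi$ and integrating against $\pi$, the constant term drops since $\int\xi\de\pi(\xi)=0$, the linear term gives $\sigma\big(\int\xi\xi^\top\de\pi\big)\nabla f(\mu)=\sigma\nabla f(\mu)$ because the covariance is the identity, and the remainder is bounded by $\tfrac{M}{2}\sigma^2\int\norm{\xi}^3\de\pi=O(\sigma^2)$. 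Thus $\int f(\mu+\sigma\xi)\xi\de\pi=\sigma\nabla f(\mu)+O(\sigma^2)$; multiplying by $\eta\sigma$ and inserting $\sigma^2=\tau/\eta$ turns the main term into $\eta\sigma^2\nabla f(\mu)=\tau\nabla f(\mu)$ and the error into $\eta\sigma^3=\tau\sigma=\sqrt{\tau^3/\eta}$. This uses only the first two Gaussian moments, so Stein's lemma is indeed not needed.

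For the second identity the obstacle is that the expression is a ratio of integrals, so I cannot expand numerator and denominator separately and then divide. My first step would be to cancel the common factor $\exp(-\alpha f(c))$ and apply Gaussian integration by parts to the numerator: since $\nabla_\xi\exp(-\alpha f(c+\tilde\sigma\xi))=-\alpha\tilde\sigma\,\nabla f(c+\tilde\sigma\xi)\exp(-\alpha f(c+\tilde\sigma\xi))$, Stein's identity gives $\int \exp(-\alpha f(c+\tilde\sigma\xi))\,\xi\de\pi=-\alpha\tilde\sigma\int \nabla f(c+\tilde\sigma\xi)\exp(-\alpha f(c+\tilde\sigma\xi))\de\pi$. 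Using $\alpha\tilde\sigma^2=\tau$, the whole quantity becomes $-\tau$ times the average of $\nabla f(c+\tilde\sigma\xi)$ against the nonnegative weights $\exp(-\alpha f(c+\tilde\sigma\xi))$, i.e.\ a genuine convex combination. Subtracting the target $-\tau\nabla f(c)$ leaves $-\tau$ times the weighted average of $\nabla f(c+\tilde\sigma\xi)-\nabla f(c)$, whose norm I bound by $\tau M\tilde\sigma$ times the weighted mean of $\norm{\xi}$ via the Hessian bound.

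The delicate step is therefore controlling that weighted mean and, implicitly, keeping the denominator away from zero. For the denominator I would use Jensen together with the fact that the linear part of $f(c+\tilde\sigma\xi)-f(c)$ integrates to zero by symmetry, giving $\int\exp(-\alpha(f(c+\tilde\sigma\xi)-f(c)))\de\pi\ge\exp(-\tfrac{M}{2}\alpha\tilde\sigma^2 d)=\exp(-\tfrac{M}{2}\tau d)$, bounded below uniformly for bounded $\tau$. The bounded first derivative then lets me compare the weighted measure to the Gaussian tilt $\mathcal N(-\alpha\tilde\sigma\nabla f(c),\mathbb{I})$ and estimate the weighted mean of $\norm{\xi}$ in terms of $\norm{\alpha\tilde\sigma\nabla f(c)}$ and $\sqrt d$; collecting the prefactor $\tau M\tilde\sigma$ with this estimate and substituting $\tilde\sigma^2=\tau/\alpha$ then yields the remainder recorded in the proposition. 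An essentially equivalent, Stein-free route writes the weight as $\exp(-\alpha\tilde\sigma\nabla f(c)^\top\xi)\exp(-\alpha\rho(\xi))$ with $\abs{\alpha\rho(\xi)}\le\tfrac{M}{2}\tau\norm{\xi}^2$, extracts the leading term $-\tau\nabla f(c)$ from the exact identities $\int e^{-b^\top\xi}\de\pi=e^{\norm{b}^2/2}$ and $\int e^{-b^\top\xi}\xi\de\pi=-b\,e^{\norm{b}^2/2}$, and expands the $\exp(-\alpha\rho)$ factor; the hard point is identical, namely that the relevant tilt has mean of size $\sqrt{\alpha\tau}$, so the remainder must be estimated against this shifted measure and not against the standard normal.
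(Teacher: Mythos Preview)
Your treatment of the first identity is exactly the paper's: Taylor expand $f$ to first order, use $\int\xi\de\pi=0$ and $\int\xi\xi^\top\de\pi=\mathbb{I}$, bound the remainder via a Gaussian third moment, and rescale.

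For the second identity you diverge from the paper. The paper does \emph{not} use Stein's identity (it says so explicitly just before the proposition); instead it Taylor-expands the full exponential,
\[
\exp(-\alpha f(c+\tilde\sigma\xi))=\exp(-\alpha f(c))\Bigl[1-\alpha\tilde\sigma\langle\nabla f(c),\xi\rangle+O\bigl(\tilde\sigma^2\abs{\xi}^2(\alpha^2\abs{\nabla f(c)}^2+\alpha)\bigr)\Bigr],
\]
inserts this into numerator and denominator separately, and simplifies the resulting rational expression. Your Stein route, rewriting the numerator as $-\alpha\tilde\sigma$ times a weighted expectation of $\nabla f$, is a clean alternative that immediately produces the leading term $-\tau\nabla f(c)$ and reduces the error analysis to a single weighted moment; the paper's route is more elementary but requires shepherding several error contributions through the quotient. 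Your ``Stein-free'' variant at the end (keep the linear tilt $e^{-b^\top\xi}$ exact and expand only $e^{-\alpha\rho}$) is likewise different from the paper's direct expansion and arguably more robust, since it does not need $\alpha\tilde\sigma\langle\nabla f(c),\xi\rangle$ to be small.

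One caveat: the final substitution in your Stein argument does not literally reproduce the recorded remainder. With the weighted mean of $\abs{\xi}$ estimated as $O(\abs{\alpha\tilde\sigma\nabla f(c)}+\sqrt d)$ and the prefactor $\tau M\tilde\sigma$, the two contributions are $\tau\tilde\sigma\cdot\alpha\tilde\sigma\abs{\nabla f(c)}=\tau^2\abs{\nabla f(c)}$ and $\tau\tilde\sigma\cdot\sqrt d=\sqrt{d\,\tau^3/\alpha}$, so you obtain $O(\tau^2)+O(\sqrt{\tau^3/\alpha})$ rather than $O(\sqrt{\tau^3\alpha})$. In the natural regime $\alpha\geq 1$, $\tau\leq\alpha$ this is actually sharper and implies the stated bound, but the claim that your computation ``yields the remainder recorded in the proposition'' is not accurate as written.
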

\begin{proof}
    To prove the first statement, we compute
    \begin{align*}
        \int f(\mu + \sigma\xi)\xi \de\pi(\xi) 
        &=
        \int \left(f(\mu)+\sigma\langle\nabla f(\mu),\xi\rangle + O(\sigma^2\abs{\xi}^2)\right)\xi \de\pi(\xi) 
        \\
        &=
        \sigma
        \int 
        \xi\xi^\top 
        \de\pi(\xi)
        \nabla f(\mu)
        +
        O(\sigma^2)
        =
        \sigma\nabla f(\mu) + O(\sigma^2)
    \end{align*}
    which, upon multiplication with $\eta\sigma$ and using $\sigma^2=\frac{\tau}{\eta}$, proves the first claim.
    The second statement is proved in a similar fashion, using the Taylor approximation
    \begin{align*}
        \exp(-\alpha f(c+\tilde\sigma\xi))
        =
        \exp(-\alpha f(c))
        \left[
        1
        -
        \alpha
        \tilde\sigma
        \langle
        \nabla f(c),
        \xi\rangle  
        +
        O\left(\tilde\sigma^2\abs{\xi}^2\left(\alpha^2\abs{\nabla f(c)}^2+\alpha\right)\right)
        \right].
    \end{align*}
    Using this, we obtain
    \begin{align*}
        \frac{\int\exp\left(-\alpha f(c+\tilde\sigma\xi)\right)\xi\de\pi(\xi)}{\int\exp\left(-\alpha f(c+\tilde\sigma\xi)\right)\de\pi(\xi)}
        &=
        \frac{\int\left(
        1
        -
        \alpha
        \tilde\sigma
        \langle
        \nabla f(c),
        \xi\rangle  
        +
        O\left(\tilde\sigma^2\abs{\xi}^2\left(\alpha^2\abs{\nabla f(c)}^2+\alpha\right)\right)
        \right)\xi\de\pi(\xi)}{\int\left(1
        -
        \alpha
        \tilde\sigma
        \langle
        \nabla f(c),
        \xi\rangle  
        +
        O\left(\tilde\sigma^2\abs{\xi}^2\left(\alpha^2\abs{\nabla f(c)}^2+\alpha\right)\right)\right)\de\pi(\xi)}
        \\
        &=
        \frac{-\alpha\tilde\sigma\nabla f(c)+O\left(\tilde\sigma^2\left(\alpha^2\abs{\nabla f(c)}^2+\alpha\right)\right)}{1+O\left(\tilde\sigma^2\left(\alpha^2\abs{\nabla f(c)}^2+\alpha\right)\right)}
        \\
        =
        -\alpha\tilde\sigma\nabla f(c)
        &+
        \frac{O\left(\tilde\sigma^3\alpha\abs{\nabla f(c)}\left(\alpha^2\abs{\nabla f(c)}^2+\alpha\right)\right)+O\left(\tilde\sigma^2\left(\alpha^2\abs{\nabla f(c)}^2+\alpha\right)\right)}{1+O\left(\tilde\sigma^2\left(\alpha^2\abs{\nabla f(c)}^2+\alpha\right)\right)}.
    \end{align*}
    Multiplying by $\tilde\sigma$ and using the scaling $\tilde\sigma^2=\frac{\tau}{\alpha}$ one can simplify the error terms to arrive at the conclusion.
\end{proof}
The preceding proposition shows that, when suitably scaled, both \labelcref{eq:GNES,eq:CHop} perform a noisy gradient ascent or decent, respectively, of form
\[ z_{k+1} = z_k \pm \tau \widehat{\nabla f(z_k)}= z_k \pm \tau \nabla f(z_k) + O\left(\tau^\frac{3}{2}\right), \]
where $\widehat{\nabla f(z_k)}$ are method-dependent estimators of the true gradient based on noisy sampling-based perturbations of either $f$ or its exponentiated form. This shows the similarity in spirit of these two methods and might give rise to future theoretical investigations of their convergence properties.
While for \labelcref{eq:CHop} a convergence analysis with pretty general conditions is provided in \cite{riedl2023gradient}, for \labelcref{eq:GNES} existing results are relatively scarce. In \cite{schaul2012natural}, convergence of NES is proven for a very small class of objective functions. Furthermore, \cite{ollivier2017information} studies the infinite population limit of a unified class of evolution strategies.
\section{Numerical Examples}\label{sec:num}
In this section, we examine the numerical performance of CBO in different attack scenarios. In addition to the norm-budget $\budget$, in the following we also consider a query budget $Q\in\N$, which determines how often a single optimizer is allowed to evaluate the network $\net$. The metric we consider in the following consist of success rates within the given query budget, as well as average and median number of queries required to achieve success.

All the following experiments show that NES and CH perform very similar, indicating that the difference in gradient estimation is not significant. In order to compare CH and NES, and we slight deviate from the formulation in \cite{riedl2023gradient} and allow for an additional step size, as described in \cref{sec:CBOvsNES}. The concrete implementation is based on the projected gradient ascent/descent scheme from \cite{ilyas2018black}, which we detail in \cref{sec:details}.
We only consider image classification tasks, where the input space is given as $\X=[0,1]^{C\times H\times W}$. Here, $H$ and $W$ denote the height and width of the images and $C$ the number of channels. Already for the ImageNet dataset, with $H=W=224$ this space is relatively high-dimensional, which is an unfavorable setting for zero-order optimization strategies. A technique to circumvent this problem, is to instead search for a perturbation in a lower-dimensional latent space $\LS$. We choose an application map $T:\LS\times\X\to\X$, which applies the perturbation $s$ to the original input $x$, i.e., the adversarial example is constructed as $\bar x=T(s; x)$. The problem in \labelcref{eq:adv} then transforms to
\begin{align}\tag{LatentAdv}\label{eq:LatentAdv}
\max_{s\in\mathcal{S}\st \phi(T(s;x), x)\leq \budget} \ell(\net(T(s;x)), \cidx_x).
\end{align}
Apart from the budget constraint, we also need to enforce that $T$ maps into $\X$. By a slight misuse of notation, in the following we employ the map $R:\R^\diamondsuit\to\X$, where the dimension $\diamondsuit$ changes given the concrete application. In most cases $R$ denotes a point-wise clipping operation, i.e., $R(z)_i = \min\{1, \max\{0, z_i\}\}$.
The implementation is based on the \texttt{CBXPy} package~\cite{bailo2024cbx}, and the source code is available in our repository \href{https://github.com/TimRoith/AdversarialCBO}{github.com/TimRoith/AdversarialCBO}.
\paragraph{Enforcing the norm budget constraint} We remark on the different possibilities to enforce the norm constraint into the considered schemes.
\begin{enumerate}[label=BC \roman*), leftmargin=*, labelsep=.5em]
\item\label{it:eca} \emph{Definition of $T$}: An easy way to enforce the norm budget is to define the application in such a way that it only maps to $B^p_\budget(x)$, which can be realized by a projection. I.e., if $T(\cdot; x)$ is the attack-specific application map, we define
\begin{align*}
\hat{T}(\cdot; x) = \Pi_{B_\budget^p}\left(T(\cdot, x)\right).
\end{align*}
With this choice, one equivalently can transform \labelcref{eq:LatentAdv} into an unconstrained problem. A disadvantage is that the particles are allowed to explore the whole space $\LS$ and the prior information, about the norm constraint, is only enforced weakly. This creates flat regions, of inputs with equal loss, which slightly worsens the performance for particle-based methods, see \cref{tab:cifar}. In our practical implementation, we always use the definition of $\hat{T}$ as a post-processing to ensure that the inputs given to the network fulfill the desired constraint. However, in order to improve the optimization, we employ the method described below.
\item\label{it:ecb} \emph{Projection step}: in the open box setting, one typically employs a projected gradient descent scheme. This means that after each gradient step, the iterate is projected back to the ball $B_\varepsilon^p(x)$. For NES, CH and CBO, this projection step can be similarly employed. Projection schemes for CBO have for example been explored in \cite{bungert2025mirrorcbo,beddrich2024constrained,bae2022constrained}.
\item\label{it:ecc} \emph{Reparametrization}: In particular, in the case of an $\ell^\infty$ constraint, one can reparametrize the constraint via $B^\infty_\budget(x) = \{x + \budget \tanh(s): s\in\R^N\}$, which has been done in \cite{carlini2017towards}.
\end{enumerate}
In the following, we mostly employ option \labelcref{it:ecb} for CH, NES and CBO and therefore the algorithms have an additional projection step after each iteration, see \cref{alg:CBO,alg:CHNES}. In the case of $\ell^\infty$ attacks, we noticed only minor differences between option \labelcref{it:ecb} and \labelcref{it:ecc} and therefore do not consider it separately in the following. However, when we compare to other schemes like CMA in the $\ell^\infty$ setting, we use option \labelcref{it:ecc} since it allows us to directly use the implementation provided by the \texttt{Nevergrad}  package \cite{nevergrad}.

An interesting alternative to \labelcref{it:ecb} would be to instead consider the mirror or dual averaging view as in \cite{bungert2025mirrorcbo}. There, one considers a \textit{dual} ensemble, which is allowed to move freely in some space. For the computation of the consensus point the ensemble would then be mapped back to $B_\budget^p(x)$ via a projection. The subtle, but significant difference to variant \labelcref{it:eca} is that here all particles used in \labelcref{eq:con} are in $B_\budget^p(x)$, while in \labelcref{it:eca} only the points queried to $\net$ are. We leave this direction for future research.
\paragraph{The objective functions} Here we describe the concrete choice of loss functions $\ell:\R^K\times\{1,\ldots,K\}\to\R$ employed in the following. Note, that we now do not assume that the first input of $\ell$ is already normalized to be in $\Delta^K$, but allow any vector in $\R^K$. When dealing with untargeted attacks, with true label $\cidx$, we employ the margin-based loss \cite{carlini2017towards}, 
\begin{align*}
\ell(y, \cidx) := -y_\cidx + \max_{\tilde{\cidx}\neq \cidx} y_{\tilde{\cidx}}.
\end{align*}
When maximizing over $y$, this function penalizes the entry corresponding to the true label $\cidx$ and enforces the output at a different (wrong) label to be big. In particular, whenever $\ell(f(\bar{x}), \cidx) < 0$, then $\bar{x}\in B_\budget(x)$ is an adversarial example.
For targeted attacks with target label $\kappa$, we employ a cross-entropy type loss, which in this case reduces to
\begin{align*}
\ell(y, \cidx) &:= y_\cidx - \log\left(
\sum_{\tilde{\cidx}=1}^K \exp(y_{\tilde{\cidx}})\right).
\end{align*}
For a single attack the objective function for the optimization is then defined as $f(s) := \ell(\net(\tilde{T}(s, x)), \kappa)$

\subsection{Direct attacks on CIFAR-10}\label{sec:cifar} 
We first consider a simple example on the CIFAR-10 dataset \cite{slowik2020evolutionary}. In this case, the image space is comparably low-dimensional with $H=W=32$ and $C=3$. Therefore, a direct attack with $\mathcal{S}=\mathcal{X}$ together with the application map $T(s; x) := R(x + s)$ is feasible. In \cref{tab:cifar} we compare the results of different optimizers in the $\ell^\infty$ setting, with a norm budget of $\budget=0.05$ and a query budget of $Q=10,000$. For NES we used the algorithm proposed in \cite{ilyas2018black}, with the same hyperparameters. For CH, we use exactly the same implementation, only exchanging the gradient estimation, more details are provided in \cref{alg:CHNES}. For CBO we use the hyperparameters detailed in \cref{sec:details}. For comparison, we also consider the performance of the diagonal CMA algorithm, where we employ the implementation provided in the \texttt{Nevergrad} package, \cite{nevergrad}, using the hyperparameters specified there. Here we compare \labelcref{it:eca} and \labelcref{it:ecc} for enforcing the budget constraints, where, as expected, the latter leads to better results. The network to be attacked is a ResNet50 architecture \cite{he2015delving}, which has been trained to a test accuracy of $95.3\%$.

We first observe that NES and CH, perform very similar, supporting the previous connection made between the schemes. In particular, the difference in gradient estimation, does not have significant impact on the results. Beyond that, the standard CBO algorithm requires fewer queries, than the other algorithms. %albeit having a slightly higher failure rate for the computation of attacks.
\begin{table}
\footnotesize%
\centering%
{\bfseries Targeted attacks on CIFAR-10}\\[.5em]
\begin{tabularx}{\textwidth}{p{3cm}*{3}{>{\centering\arraybackslash}X}}
\toprule
\multirow{1}{*}{\bfseries Attack} &
\multicolumn{1}{c}{\bfseries Failure Rate $\downarrow$} &
\multicolumn{1}{c}{\bfseries Average Queries $\downarrow$} &
\multicolumn{1}{c}{\bfseries Median Queries $\downarrow$} \\
%\cmidrule(lr){2-2}\cmidrule(lr){3-4}\cmidrule(lr){4-4}
%
\midrule
NES   & \bfseries 0.0$ \%$ & 1822.5 \tiny (1822.5) & 1786\\
CH   & \bfseries 0.0$ \%$ & 1833.1 \tiny (1833.1) & 1837\\
CBO  & \bfseries 0.0$ \%$ & 494.2 \tiny (494.2) & 310\\

\midrule
DiagonalCMA \labelcref{it:eca}  & 0.5$ \%$ & 983.6 \tiny (1019.9) & 729\\
DiagonalCMA \labelcref{it:ecc} & \bfseries 0.0$ \%$ & 840.1 \tiny (840.1) & 645\\
%\midrule
%
% Orthogonal CBO I & 0.4$ \%$ & 1428.6 \tiny (1451.0) & 715\\
% Orthogonal CBO II  & 0.2$ \%$ & 536.9 \tiny (537.2) & 331\\
%
\bottomrule
\end{tabularx}
\caption{Performance of different optimizers for the attack problem on CIFAR-10. Here and in the following, the query statistics are computed only on the successful runs. The average number of queries computed on all runs is reported in brackets. We print the best success rate and lowest query count on the successful runs in bold, see also \cref{rem:querycount}.}\label{tab:cifar}
\end{table}
%
%
%
%
% <><><><><><><><><><><><><><><><>
% Example graphic Low-resolution
% <><><><><><><><><><><><><><><><>
%
%
\begin{figure}
\begin{subfigure}[t]{.28\textwidth}
\includegraphics[width=\textwidth,trim={1.cm 1cm 1cm 1.cm}, clip]{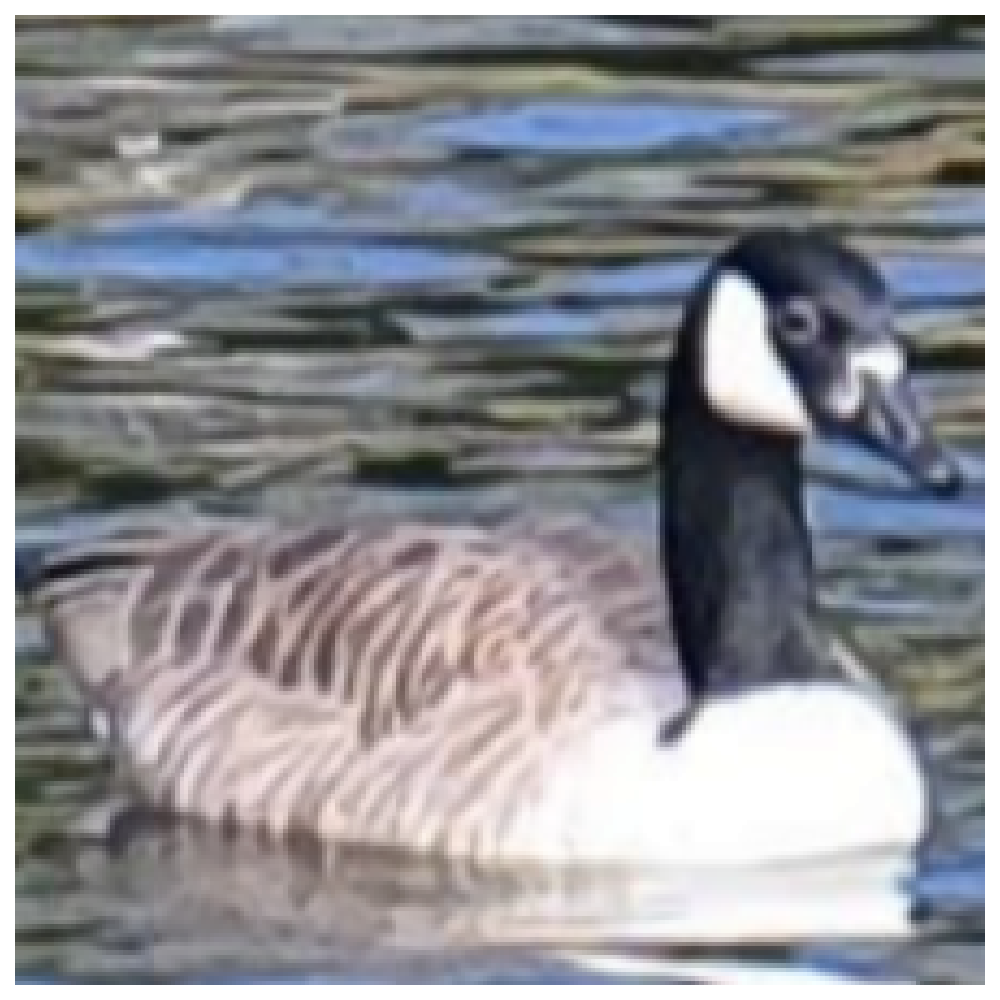}
\caption{Classified as \enquote{Goose}, with probability $99.9\%$ on resolution $H=W=224$.}
\end{subfigure}\hfill%
\begin{subfigure}[t]{.28\textwidth}
\includegraphics[width=\textwidth,trim={1.cm 1cm 1cm 1.cm}, clip]{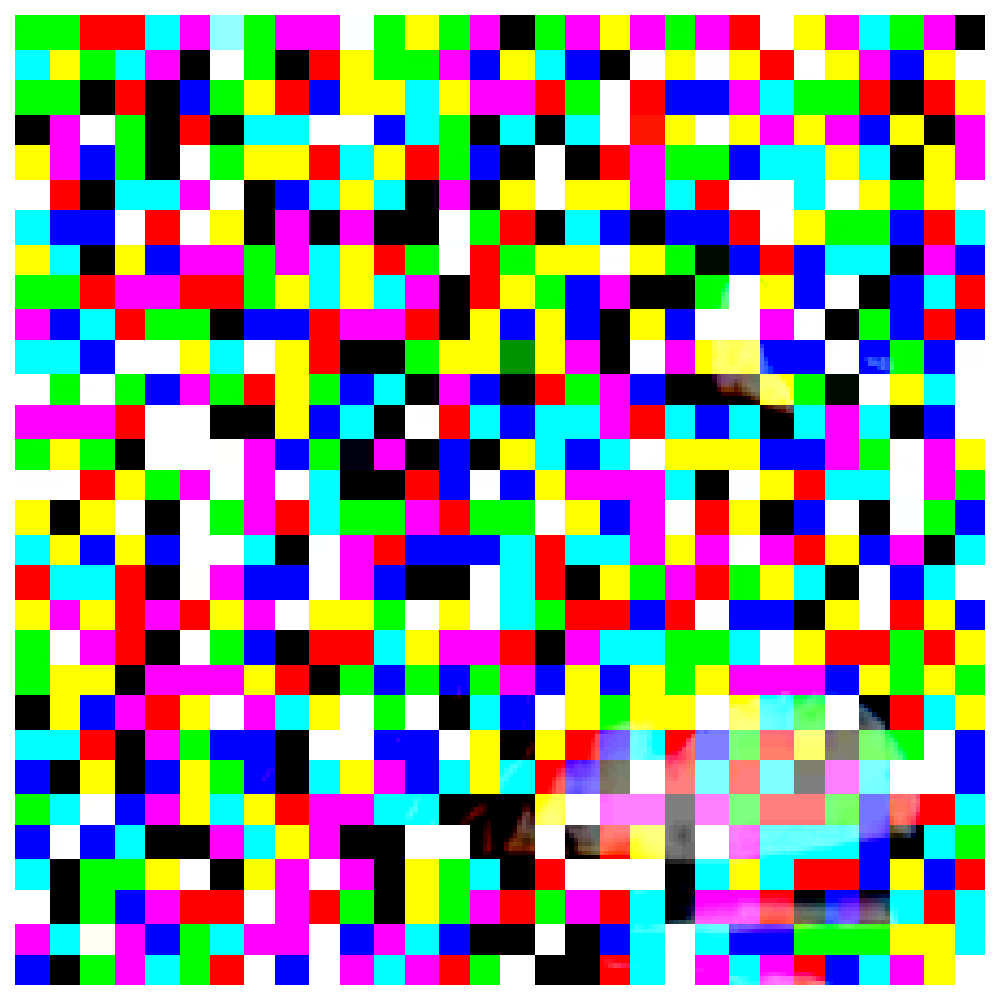}
\caption{Low resolution perturbation $\dx$ with $H_{\text{low}}=W_{\text{low}}=50$ and $\abs{\dx}_\infty \leq 0.05$.}
\end{subfigure}\hfill%
\begin{subfigure}[t]{.28\textwidth}
\includegraphics[width=\textwidth,trim={1.cm 1cm 1cm 1.cm}, clip]{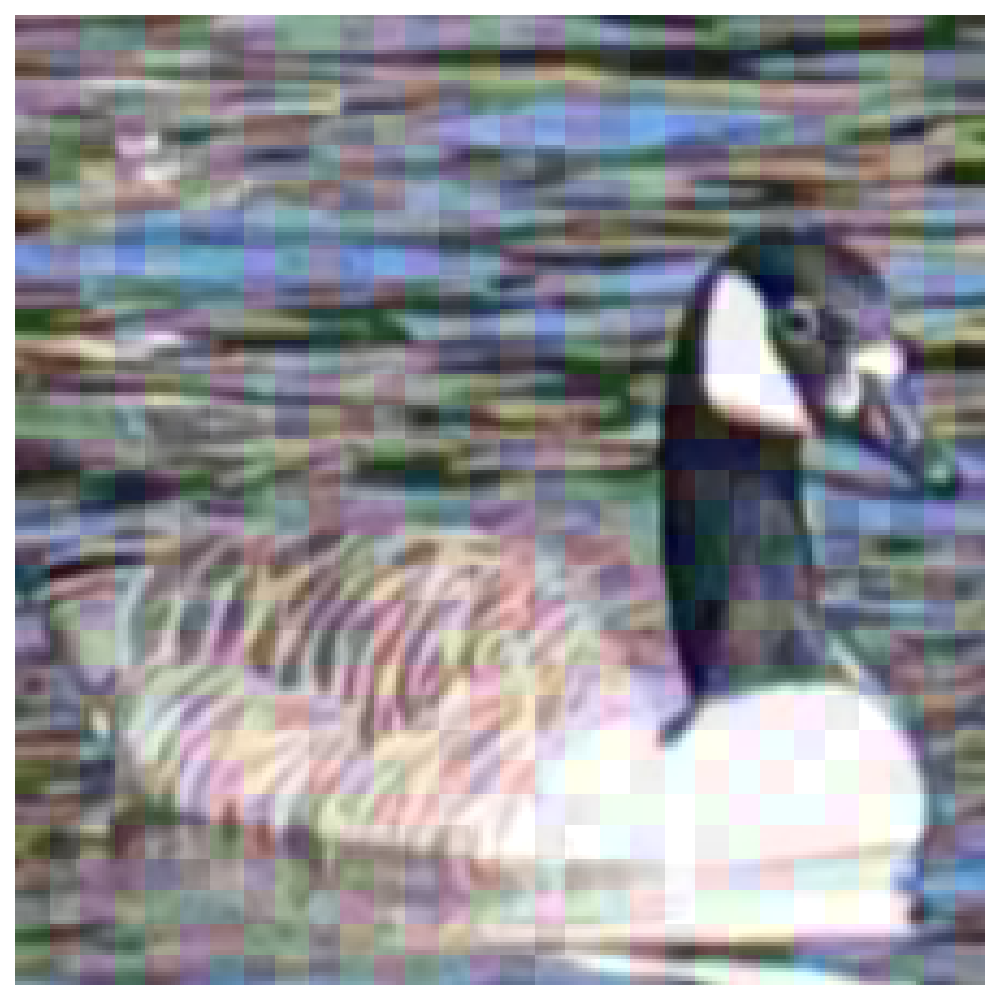}
\caption{Classified as \enquote{Park bench}, with probability $18.9\%$.}
\end{subfigure}\hfill%
\caption{Example of a low-resolution attack on the InceptionV3 architecture.
% funny dog : 106
% horse: 222
}\label{fig:lowres-ex}
\end{figure}

\subsection{Low resolution attacks}\label{sec:lowres}
In this section we consider attacks on the ImageNet dataset, \cite{deng2009imagenet}, where now $H=W=224$. A simple approach to reduce the dimensionality of the image space, is to consider a low-resolution representation of the perturbation $\dx$. This means, we choose a resolution $H_{\text{low}}\leq H, W_{\text{low}}\leq W$ and consider the search space $\LS:=[0,1]^{C\times H_{\text{low}} \times W_{\text{low}}}$. Together with an interpolation map $I:\LS \to \X$ this yields the application
\begin{align*}
T(s; x) := R(x + I(s)).
\end{align*}
Choosing $I$ as a nearest neighbor interpolation corresponds to so-called pixel-tiling, which is a well-known technique in the context of closed-box adversarial attacks, see, e.g., \cite{meunier2019yet,ilyas2018prior}. In \cref{fig:lowres-ex} we display an example of an attack produced with this strategy and \cref{tab:lowres} shows a quantitative comparison of different optimizers. As done in other works on this topic (see e.g. \cite{ACFH2020square,shukla2021simple}) we attack three different model architectures, namely, InceptionV3 \cite{szegedy2016rethinking} (I), ResNet50 \cite{he2015delving} (R) and VGG-16-BN \cite{simonyan2014very} (V), where we employ the weights as provided by the \texttt{PyTorch} package \cite{paszke2019pytorch}.

Again, we observe that NES and CH perform very similar in this setting, underlining the insight from \cref{sec:CBOvsNES}. Beyond that, standard CBO outperforms NES by a large margin across different architectures. In particular, it also works better than CMA~\cite{hansen2006cma}.

\begin{table}
\footnotesize%
\centering%
{\bfseries Untargeted low-resolution attacks on ImageNet}\\[.5em]
\begin{tabularx}{\textwidth}{p{3.5cm}*{9}{>{\centering\arraybackslash}X}}
\toprule
\multirow{2}{*}{\bfseries Attack} &
\multicolumn{3}{c}{\bfseries Failure Rate $\downarrow$} &
\multicolumn{3}{c}{\bfseries Average Queries $\downarrow$} &
\multicolumn{3}{c}{\bfseries Median Queries $\downarrow$} \\
\cmidrule(lr){2-4}\cmidrule(lr){5-7}\cmidrule(lr){8-10}
& I & R & V & I & R & V & I & R & V \\
% ------------------
% Main part
% ------------------
\midrule
NES        & 1.6$\%$ & 0.2$\%$ & 0.1$\%$ &  1388.4 \tiny (1509.5) & 1044.4 \tiny (1053.8) & 728.9 \tiny (732.7) & 1072 & 715 & 154 \\
CH & 1.6$\%$ &  0.2 $\% $ & 0.1$\%$ & 1389.5 \tiny (1517.2) & 1044.3 \tiny (1053.4) & 729.6 \tiny (734.2) &  1123 & 715 & 154 \\
CBO        & \bfseries 1.5$\%$  & 0.1$\%$ & \bfseries  0.01$\%$ &  \bfseries  416.7 \tiny (560.5)  & \bfseries  250.3 \tiny (259.0) &  \bfseries 
 139.6 \tiny (143.6) & \bfseries  120  & \bfseries  70 & \bfseries  10 \\
\midrule
DFO$_c$ -- DiagonalCMA \cite{meunier2019yet} & 2.8$\%$ & 1.0$\%$ & 0.1$\%$ & 533 & 263 & 174 & 189 & 95 & 55 \\
DFO$_c$ -- CMA \cite{meunier2019yet}         & 0.8$\%$ & \bfseries 0.0$\%$ & 0.1$\%$ & 630 & 270 & 219 & 259 & 143 & 107 \\
%DFO$_c$ -- Cauchy(1 + 1)-ES                  & 2.7$\%$ & 0.4$\%$ & 0.0$\%$ & 510 & 218 & 67  & \\
\bottomrule
\end{tabularx}
\caption{Performance of different optimizers for the untargeted attack problem using the low-resolution attack space in \cref{sec:lowres}. For the results produced with our implementation, we attacked $10,000$ randomly sampled images from the test set of ImageNet.}\label{tab:lowres}
\end{table}

\begin{remark}
Some remarks on the results in \cref{tab:lowres} are in order. The metrics from \cite{meunier2019yet} were directly taken from their paper. The goal of this table is to compare the performance between similar zero-order optimizers, therefore we only used the best performing methods with a strong relation to NES. A more exhaustive comparison is provided in \cref{tab:lowresmore}. Moreover, in \cite{meunier2019yet} the authors explore the idea of solving a discrete combinatorial problem, as in \cite{moon2019parsimonious}, by using the search space 
\begin{align*}
\mathcal{S} = \{-\budget, \budget\}^{C\times H_\text{low} \times W_\text{low}},
\end{align*}
which leads to better results. In the current work, we only employ the \enquote{continuous} formulation, which is denoted by the DFO$_c$ in \cite{meunier2019yet} and respectively in $\cref{tab:lowres}$. Furthermore, we note that \cite[Tab. 2]{qiu2021black} obtains different results for adversarial attacks with NES and CMA. It is not possible to directly pin the difference in setups, and a comparison between our values here and the ones in \cite{qiu2021black} is not meaningful.
\end{remark}
\paragraph{Limitations for targeted attacks}  In \cref{tab:low_res_tar} we show that the performance of CBO is unsatisfactory for targeted attacks on ImageNet. We use exactly the same hyperparameters as before. We see that even standard NES and CH outperform CBO in this setting. Therefore, we can conclude that the advantages of CBO are mainly visible in easier attack settings. The authors in \cite{meunier2019yet} observed a similar phenomenon, where optimizers that perform well in the untargeted setting are significantly worse in the targeted setting. While we do not have a full explanation for this in the CBO setting, \cref{sec:tarfail} offers some additional remarks and figures on this result.
\begin{table}
\footnotesize%
\centering%
{\bfseries Targeted low resolution attacks on ImageNet}\\[.5em]
\begin{tabularx}{\textwidth}{p{3.5cm}*{9}{>{\centering\arraybackslash}X}}
\toprule
\multirow{2}{*}{\bfseries Attack} &
\multicolumn{3}{c}{\bfseries Failure Rate $\downarrow$} &
\multicolumn{3}{c}{\bfseries Average Queries $\downarrow$} &
\multicolumn{3}{c}{\bfseries Median Queries $\downarrow$} \\
\cmidrule(lr){2-4}\cmidrule(lr){5-7}\cmidrule(lr){8-10}
& I & R & V & I & R & V & I & R & V \\
% ------------------
% Main part
% ------------------
\midrule
NES & 1.7$ \%$ & \bfseries 0.8$ \%$ & \bfseries 0.1$ \%$ & 10163.6 \tiny (10452.1) & \bfseries 5931.2 \tiny (5971.1) & 4662.2 \tiny (4668.2) & 7651.0 & 5305.0 & 4234.0\\
CH &  2.1$ \%$ & 1.4$ \%$ & \bfseries 0.1$ \%$ & \bfseries 9977.9 \tiny (10296.2) & 6017.9 \tiny (6058.7) & 4631.7 \tiny (4632.0) & 7651.0 & 5356.0 & 4234.0\\
CBO         & 3.5$ \%$ & 0.9$ \%$ & 0.2$ \%$ & 14478.2 \tiny (16184.6) & 6320.8 \tiny (6329.6) & \bfseries 3590.7 \tiny (3594.9) & 7800 & \bfseries 3730 & \bfseries 2239\\
\midrule
DFO$_c$ -- DiagonalCMA \cite{meunier2019yet} & 6.0 $\%$ & --- & --- & 6768 & --- & --- & \bfseries 3797 & --- & --- \\
DFO$_c$ -- CMA \cite{meunier2019yet}         & \bfseries 0.0$\%$ & --- & --- & \bfseries 6662 & --- & --- &  4692 & --- & --- \\
%DFO$_c$ -- Cauchy(1 + 1)-ES                  & 2.7$\%$ & 0.4$\%$ & 0.0$\%$ & 510 & 218 & 67  & \\
\bottomrule
\end{tabularx}
\caption{Performance for low-resolution attacks with a budget of $\budget=0.05$ in the $\ell^\infty$ distance and a query budget of $Q=100,000$. Note that \cite{meunier2019yet} do not report results for the V and R architectures.}
\label{tab:low_res_tar}
\end{table}
% \todo{why do the last two methods have no results for R and V?
% TR: the numbers here were taken from \cite{meunier2019yet} and there, they did not report R,V}
%
%
\subsection{$P$-pixel attacks}

%
%
% <><><><><><><><><><><><><><><><>
% Example graphic Low-resolution
% <><><><><><><><><><><><><><><><>
%
%
\begin{figure}[t]
\begin{subfigure}[t]{.35\textwidth}
\includegraphics[width=\textwidth,trim={1.cm 1cm 1cm 1.cm}, clip]{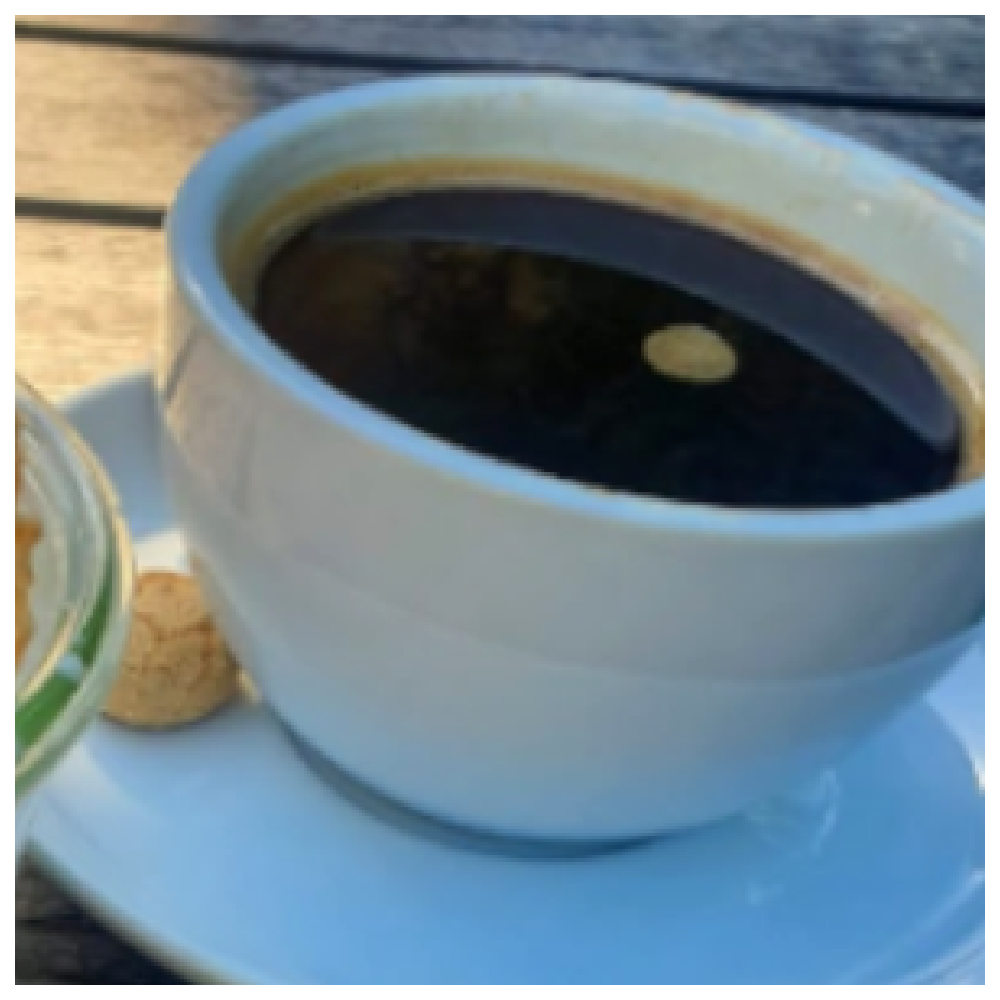}
\caption{Classified as \enquote{Cup}, with probability $27.1\%$.}
\end{subfigure}\hfill%
\begin{subfigure}[t]{.35\textwidth}
\begin{tikzpicture}
\hspace{-.63\textwidth}%
\begin{scope}[spy using outlines={circle,magnification=4,connect spies,size=4cm}]
    \node[inner sep=0,outer sep=0,anchor=south west] (image) at (0,0) 
      {\includegraphics[width=\textwidth]{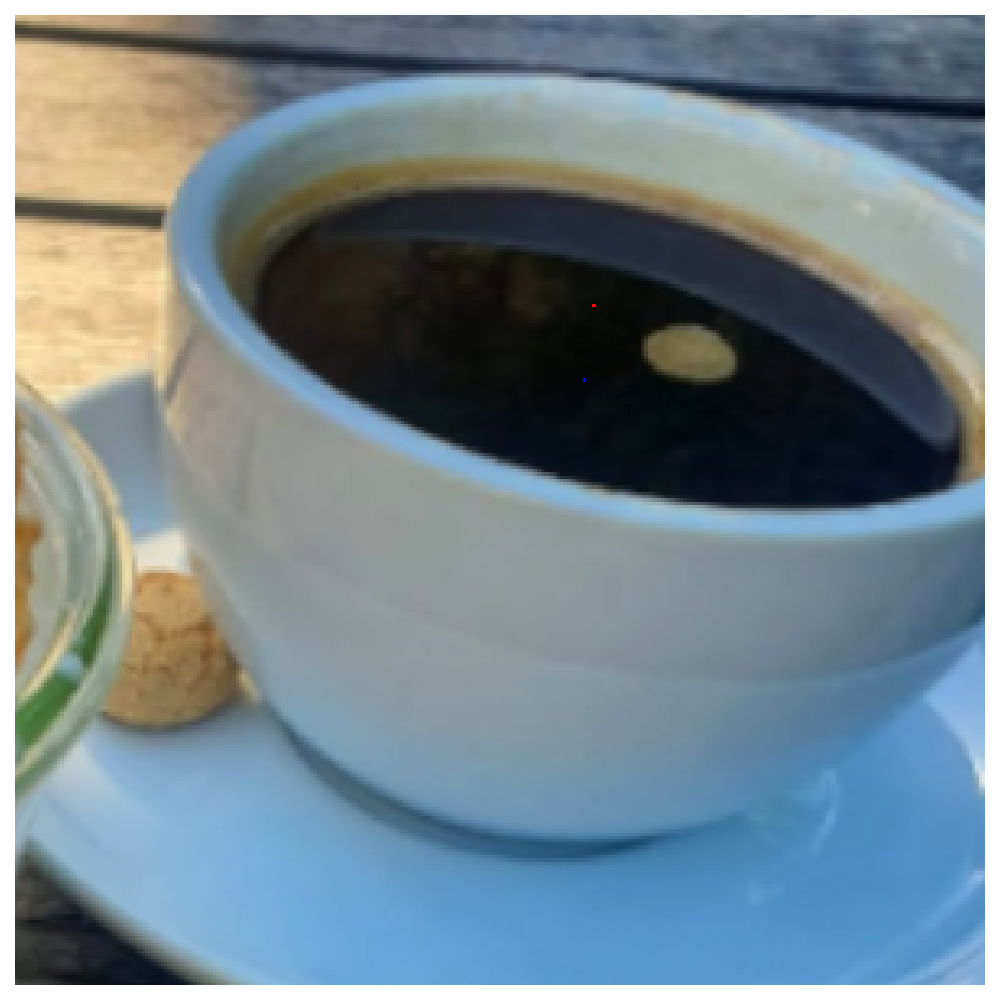}};
    \spy[orange, every spy on node/.append style={ultra thick}, spy connection path={\draw[ultra thick, orange] (tikzspyonnode) -- (tikzspyinnode);}] on (.58\textwidth,.65\textwidth) in node (zoom) at (-.3\textwidth,.5\textwidth);
\end{scope}
\end{tikzpicture}
\caption{Classified as \enquote{Consommé}, with probability $37.3\%$.}
\end{subfigure}\hfill%
\caption{Example of a $2$-pixel attack on the picture depicted on the left. The two pixel added to the image on the right successfully change the label of the classification. Note, that semantically the attack shown here, does not constitute an extreme misclassification. However, in the performance metrics any run, where the label is changed, counts as successful.
}\label{fig:pix}
\end{figure}

So-called \emph{one-pixel} attacks were introduced in \cite{su2019one}, see \cref{fig:pix}. The idea here is that an adversary is allowed to only change one-pixel to produce the perturbed image. As a generalization, the authors then further allow an arbitrary number $P$ of pixels to be changed, which we refer to as $P$-pixel attacks. In this case, the attack space can be modeled as
\begin{align*}
\left([0,1]^C \times \{1,\ldots, H\} \times \{1,\ldots, W\}\right)^P,
\end{align*}
encoding $C$ variables for the color of each pixel and the coordinate it has on the image. For our experiments, we choose a continuous relaxation of this attack space, namely,
\begin{align*}
\mathcal{S} = \left(\underbrace{[0,1]^C}_{\text{color value}} \times \underbrace{[0,1]^2}_{\text{position}}\right)^P,
\end{align*}
where the position can now freely vary in the domain $[0,1]^2$. We can extract the pixel index via the map
\begin{align*}
\gamma: [0,1]^2 &\to \{1,\ldots, H\} \times \{1,\ldots, W\}\\ \gamma(\pi)&:= (\lfloor \pi_1\cdot H \rfloor, \lfloor \pi_2\cdot W\rfloor).
\end{align*}
For an element $s\in\LS$, we employ the notation $s=((\zeta^{(1)}, \pi^{(1)}), \ldots, (\zeta^{(P)}, \pi^{(P)}))$, denoting $P$ pairs (color value, positions) and define the mapping
\begin{align*}
\tilde{T}(s; x)_{c, i, j} = 
x_{c,i,j} + \sum_{p=1}^P \boldsymbol{\delta}_{(i,j), \gamma(\pi^{(p)})}\cdot \zeta^{(p)}_c
\end{align*}
where $\boldsymbol{\delta}$ denotes the Kronecker delta, we obtain the application map as $T = R \circ \tilde{T}$.
In \cite{su2019one}, the authors employ so-called \textit{differential evolution} (DE) to optimize the attack problem in \labelcref{eq:LatentAdv} with the above attack space. This is again a particle based zero-order optimizer, and therefore CBO can be similarly employed here. In \cref{tab:pixel} we evaluate the performance of different optimizers in the $1$-pixel attack setting. In order to compare to the results obtained in \cite{su2019one} we attack the AlexNet (A) \cite{krizhevsky2012imagenet,krizhevsky2014one} architecture instead of (R). For (I) and (V) we allow a maximum budget of $10,000$ evaluations, while for (A) we allow 40,000 in order to compare with the results in \cite{su2019one}. We attack $1000$ randomly chosen images from the ImageNet dataset.
\begin{remark}
In \cref{tab:pixel} we also report the values obtained in \cite{su2019one} for the ImageNet dataset with the AlexNet network. In order to ensure better comparability, we rerun this experiment in our setup using the DE implementation provided by the \texttt{Nevergrad} library \cite{nevergrad}. Furthermore, for the experiment on (A) we allow a budget of $40,000$ queries as it was done in \cite{su2019one}. %
As remarked in \cite{su2019one}, especially the loss landscape of the $1$-pixel attack exhibits only weak structures, see \cref{fig:onepix}. Therefore, in \cref{tab:pixel} we also test the performance of a $(1 + 1)$-strategy, with Cauchy noise as proposed in \cite{yao1996fast}, again using the \texttt{Nevergrad} library \cite{nevergrad}. While the query statistics on the successful runs are much better, we observe that success rate is far lower than CBO and DE. Therefore, the query counts on the optimizers with higher success rates are biased by \enquote{harder} cases, which is similarly observed in \cite[Tab. 2]{ACFH2020square}.
Furthermore, if one restricts the pixel values to only take $m$ different values, one can reduce the dimensionality of the search space to $m^C\times H \times W$. E.g., one could only allow the pixel to be in $\{+1,-1\}$ and therefore, the number of allowed queries should be below $2\times 224^2$, otherwise a full search would be better.
\end{remark}

\begin{table}
\footnotesize%
\centering%
{\bfseries Untargeted $1$-pixel attacks on ImageNet}\\[.5em]
\begin{tabularx}{\textwidth}{p{2cm}*{9}{>{\centering\arraybackslash}X}}
\toprule
\multirow{2}{*}{\bfseries Attack} &
\multicolumn{3}{c}{\bfseries Success Rate $\uparrow$} &
\multicolumn{3}{c}{\bfseries Average Queries $\downarrow$} &
\multicolumn{3}{c}{\bfseries Median Queries $\downarrow$} \\
\cmidrule(lr){2-4}\cmidrule(lr){5-7}\cmidrule(lr){8-10}
& I & V & A & I & V & A & I & V & A \\
% ------------------
% Main part
% ------------------
\midrule
NES & 2.5$ \%$ & 3.1$ \%$ & 1.7$ \%$ & 1543.2 \tiny (9835.4) & 1461.9 \tiny (9781.8) & 1417.0 \tiny (39329.3) & 1225 & 1021 & 52\\

CH & 2.4$ \%$ & 3.8$ \%$ & 1.8$ \%$ & 1375.9 \tiny (9839.9) & 1693.4 \tiny (9730.5) & 2236.5 \tiny (39305.5) & 1021 & 1251 & 1225\\

CBO & \bfseries 7.5$ \%$ & 11.0$ \%$ & 15.0$ \%$ &  1131.2 \tiny (9325.0) &  1257.1 \tiny (9038.3) & 3475.2 \tiny (34512.8) &  260 &  285 & 310\\
\midrule
DE \cite{su2019one} & --- & --- & \bfseries 16.04$\%$ & --- & --- & 25600 & --- & --- & ---\\
DE (Re-run)       & 7.1$ \%$ & \bfseries 11.1$ \%$ & 15.4$ \%$ & 1367.3 \tiny (9378.1) & 1555.1 \tiny (9062.6) & 3871.0 \tiny (34420.2) & 503.0 & 386.0 & 785.0\\
\midrule%
Cauchy (1 + 1)      & 4.3$ \%$ & 4.8$ \%$ & 5.9$ \%$ & \bfseries 31.8 \tiny (9571.4) & \bfseries 29.6 \tiny (9501.8) & \bfseries 561.0 \tiny (37673.1) & \bfseries 28 & \bfseries 24 & \bfseries 18\\
\bottomrule
\end{tabularx}
\caption{Performance of different optimizers for the untargeted attack problem using the $1$-pixel attack space in \cref{sec:lowres}. The re-run of the differential evolution strategy employed in \cite{su2019one}, uses the implementation provided by the \texttt{Nevergrad} package.}\label{tab:pixel}
\end{table}
%
% \todo[inline]{The success rates in \cref{tab:pixel} are all pretty low. Or are these failure rates?
% TR: these are success rates, $1$-pix attacks are hard ;)}

%
%
\subsection{Spectral attacks}\label{sec:dct}

\begin{figure}
\begin{subfigure}[t]{.28\textwidth}
\includegraphics[width=\textwidth,trim={1.cm 1cm 1cm 1.cm}, clip]{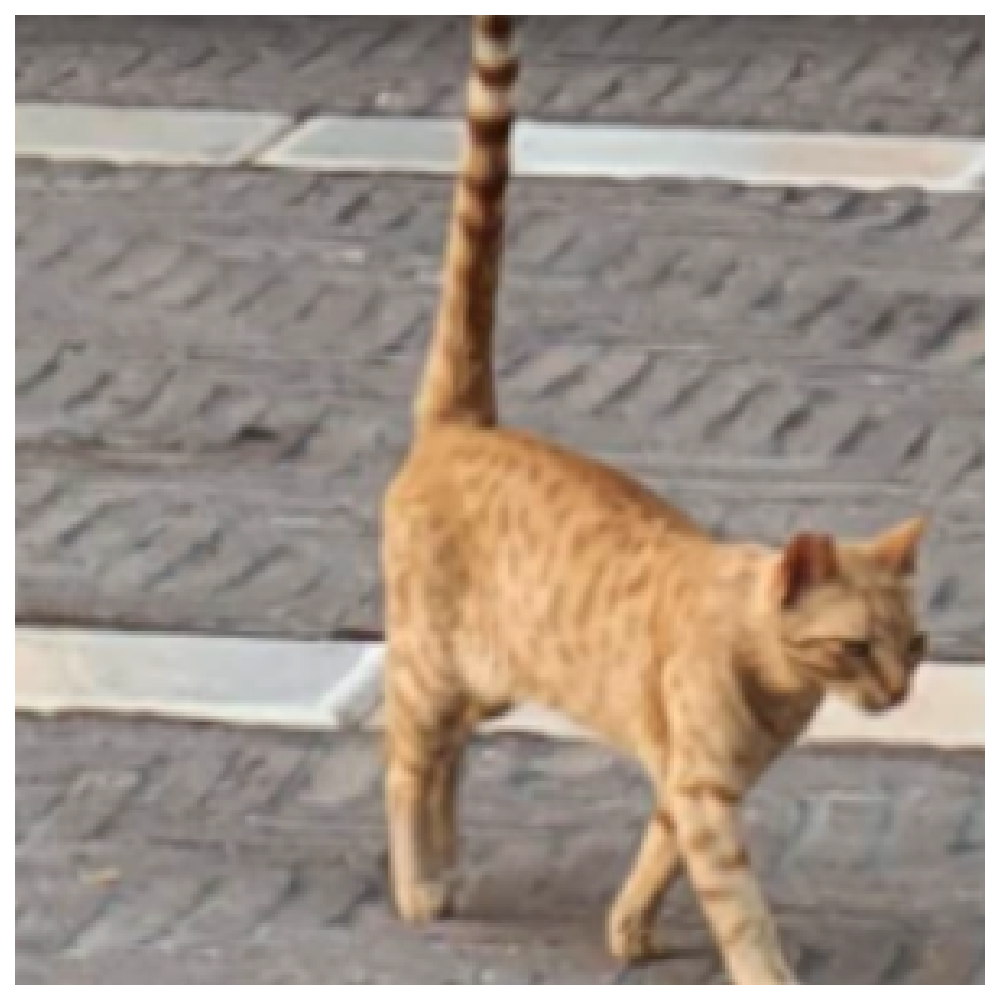}
\caption{Classified as \enquote{Egyptian Cat}, with probability $39.8\%$.}
\end{subfigure}\hfill%
\begin{subfigure}[t]{.28\textwidth}
\includegraphics[width=\textwidth,trim={1.cm 1cm 1cm 1.cm}, clip]{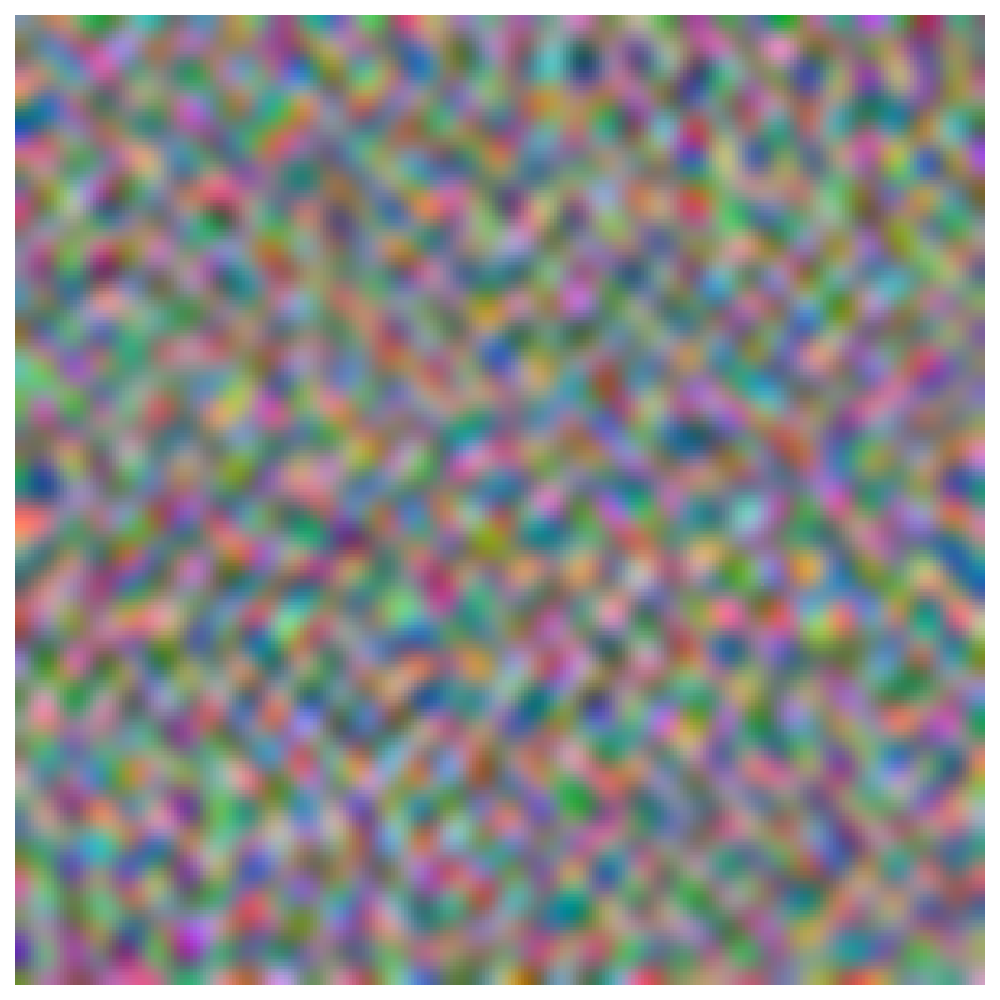}
\caption{Perturbation with $\abs{\pert{}}_2 \leq 3$.}
\end{subfigure}\hfill%
\begin{subfigure}[t]{.28\textwidth}
\includegraphics[width=\textwidth,trim={1.cm 1cm 1cm 1.cm}, clip]{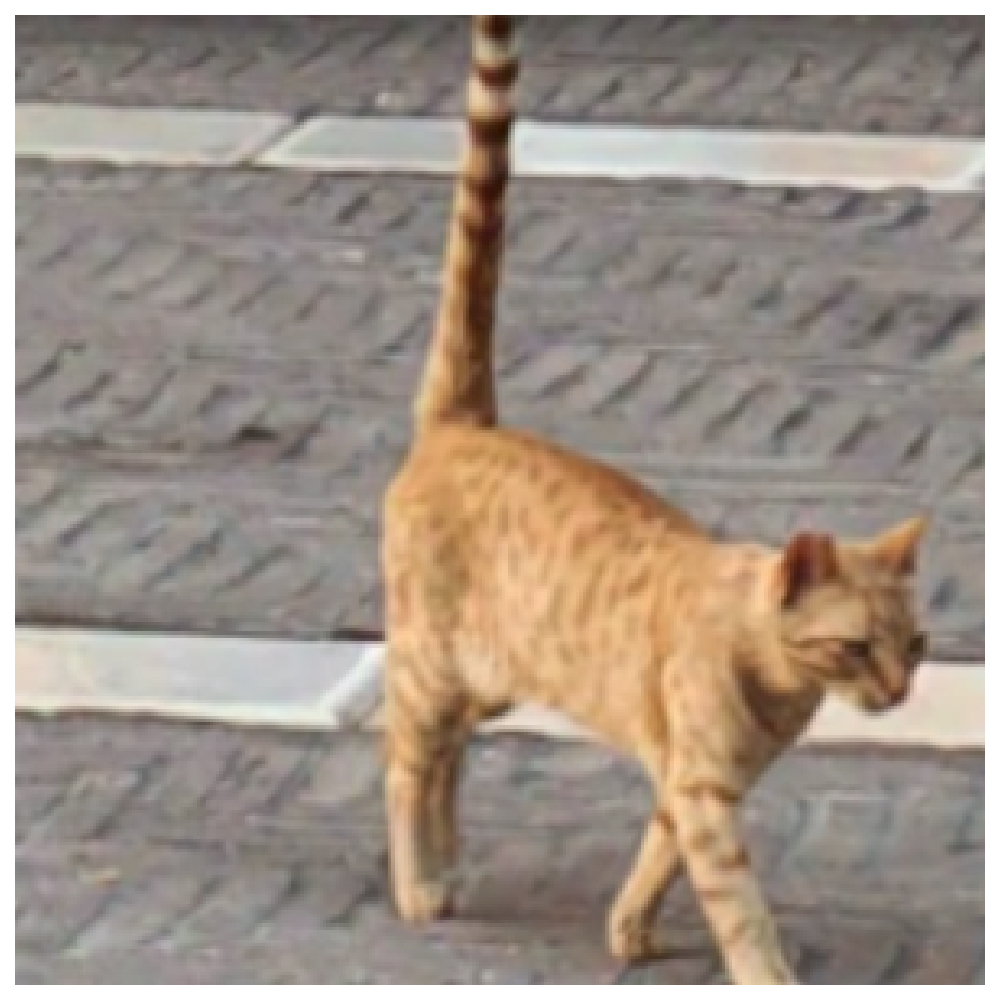}
\caption{Classified as \enquote{Chesapeake Bay Retriever}, with probability $18.3\%$.}
\end{subfigure}\hfill%
\caption{Example of a discrete cosine attack. Here in total, $28$ modes are used for the attack, resulting in $3\cdot 28^2 = 2352$ parameters to optimize.
}
\end{figure}

We now consider an attack space that is based on perturbation in some spectral representation of images. As proposed in \cite{guo2019simple} we choose the discrete cosine transform (DCT) \cite{Ahmed_dct}, $D:[0,1]^{C\times H\times W}\to \R^{C\times H\times W}$, where we refer to \cite{khayam2003discrete} for a precise definition. In order to reduce the dimensionality of the attack space, we extract the first $m$ modes in each direction. This yields the attack space $\mathcal{S} = \R^{C\times m\times m}$, with the application map
\begin{align*}
T(s; x):= R(D^{-1}(P(s)) + x)
\end{align*}
where $P:\mathcal{S}\to\R^{C\times H\times W}$ places the coefficients in the top left corner and fills the missing entries with zeros. In this setting one typically considers an $\ell^2$-type constraint, and thus by choosing a normalization such that $D$ is an orthogonal operator, we obtain
\begin{align*}
\abs{x - T(s;x)}_2 \leq \abs{D^{-1}(P(s))}_2 = \abs{P(s)}_2 = \abs{s}_2.
\end{align*}
Therefore, in the following we choose the $\ell^2$-projection in the projection step of \cref{alg:CBO,alg:CHNES}.
In \cref{tab:dct} we compare the performance for an untargeted attack, between the CBO-type optimizers and the algorithm employed in \cite{guo2019simple}. For the simpler attack task on CIFAR-10 we observe that CBO requires significantly fewer queries to arrive at a successful attack. However, for the more difficult task on ImageNet, CBO is not able to achieve competitive results. While the CH variant and NES perform better, both their success rates and the required queries are worse than the results obtained in \cite{guo2019simple}. We address the differences between the algorithms in the next paragraph. Nevertheless, we can again conclude that CBO works well, especially in easier attack scenarios.
\begin{remark}
Apart from the DCT parameterization, other spectral attacks have been explored, for example in \cite{shukla2021simple},  where a Fourier basis was chosen. An interesting direction here would be to consider different spectral representation, based on, e.g., wavelets.
\end{remark}
\begin{table}
\footnotesize%
\centering%
{\bfseries Untargeted DCT attacks}\\[.5em]
{\bfseries On CIFAR-10}\\[.5em]
\begin{tabularx}{\textwidth}{p{3cm}*{3}{>{\centering\arraybackslash}X}}
\toprule
\multirow{2}{*}{\bfseries Attack} &
\multicolumn{1}{c}{\bfseries Success Rate $\uparrow$} &
\multicolumn{1}{c}{\bfseries Average Queries $\downarrow$} &
\multicolumn{1}{c}{\bfseries Average $\ell^2$ $\downarrow$} \\
\cmidrule(lr){2-2}\cmidrule(lr){3-4}\cmidrule(lr){4-4}
& R & R & R \\
\midrule
NES  & \bfseries 100.$ \%$ &   314.0 \tiny (314.0) & 3.0\\
CH   & \bfseries 100.$ \%$ &   314.6 \tiny (314.6) & 2.9\\
CBO  & 99.8$ \%$ &   \bfseries 173.1 \tiny (192.7) & 2.4\\
\midrule%
SimBA-DCT \cite{guo2019simple} &  \bfseries 100.$\%$ & 353.  & \bfseries 2.2 \\
\midrule
\midrule
\end{tabularx}\\[.5em]
{\bfseries On ImageNet}\\[.5em]
\begin{tabularx}{\textwidth}{p{3cm}*{9}{>{\centering\arraybackslash}X}}
\toprule
\multirow{2}{*}{\bfseries Attack} &
\multicolumn{3}{c}{\bfseries Success Rate $\uparrow$} &
\multicolumn{3}{c}{\bfseries Average Queries $\downarrow$} &
\multicolumn{3}{c}{\bfseries Average $\ell^2$ $\downarrow$} \\
\cmidrule(lr){2-4}\cmidrule(lr){5-7}\cmidrule(lr){8-10}
& I & R & V & I & R & V & I & R & V \\
% ------------------
% Main part
% ------------------
\midrule
NES   & 68.7$ \%$ & \bfseries 91.8$ \%$ & \bfseries 96.4$ \%$ & 3417.7 \tiny (5443.1) & 2748.1 \tiny (3319.5) & 2337.2 \tiny (2613.0) & 3.0 & 3.0 & 3.0\\
CH    & 68.1$ \%$ & 91.7$ \%$ & 96.1$ \%$ & 3369.2 \tiny (5451.1) & 2738.6 \tiny (3315.3) & 2352.8 \tiny (2639.6) & 3.0 & 3.0 & 3.0\\
CBO    & 19.6$ \%$ & 33.7$ \%$ & 38.8$ \%$ & \bfseries 1175.1 \tiny (8242.8) & \bfseries 960.5 \tiny (6947.1) & 1019.5 \tiny (6509.5) & \bfseries 2.9 & \bfseries 2.8 & 2.8\\

\midrule
CBO (DCT noise)   & 59.1$ \%$ & 77.0$ \%$ & 84.4$ \%$ & 1223.0 \tiny (4756.6) & 1042.9 \tiny (3100.8) & \bfseries 840.2 \tiny (2267.5) & 3.1 & 2.8 & \bfseries 2.7\\
\midrule
SimBA-DCT \cite{guo2019simple} &  \bfseries 97.8$\%$ & --- & --- & 1283 & --- & --- & 3.1 & --- & --- \\
SimBA-DCT (re-run) &  86.9$ \%$ & ---  & ---  & 2111.7 \tiny (2982.1) & ---  & ---  & 4.6 & ---  & --- \\
\bottomrule
\end{tabularx}
\caption{Performance of different optimizers for the untargeted attack problem using the DCT attack space in \cref{sec:dct}. For NES, CH and CBO we allow a budget of $\budget=3.0$. For the re-run we employed the code provided by \cite{guo2019simple}. We do not re-run experiments on (R) and (V) since these models expect a different input resolution, which is not directly possible in the provided implementation.}\label{tab:dct}
\end{table}

\begin{remark}\label{rem:querycount}
As in the other tables, in \cref{tab:dct} we also print the lowest success query count per column in bold. However, we want to highlight that in this case where the success rates differ a lot, it is more meaningful to consider the query statistics on all runs.
\end{remark}

\paragraph{Beyond the latent space parametrization} 
The attack proposed in \cite{guo2019simple} exhibits conceptual differences from our particle based approaches here. The update step of the scheme for the perturbation in \cite{guo2019simple} falls into the category of $(1 + \lambda)$-type schemes \cite{beyer2002evolution,rechenberg1978evolutionsstrategien}. At iteration $k$ denote by $s_k$ the current perturbation, then the update reads
\begin{equation}\label{eq:latentadd}\tag{($1+\lambda$)-ES} 
\begin{gathered}
\text{Draw}\quad \tilde{s}^{(1)}, \ldots, \tilde{s}^{(n)} \in \mathcal{S},\\
s_{k+1} = s_k + \argmin_s \left\{ f(s_k + s) : s \in \{\tilde{s}^{(1)}, \ldots, \tilde{s}^{(n)}\}\right\}.
\end{gathered}
\end{equation}
More precisely, in \cite{guo2019simple} it is assumed that $\mathcal{S}$ is a vector space with orthogonal basis vectors $\{b^{(1)}, \ldots, b^{(d)}\}$. In each step a single basis vector $b^{(k)}$ is drawn randomly and then in the above notation one sets $\tilde{s}^{(1)} = \tau b^{(k)}$, for some parameter $\tau>0$ and further $\tilde{s}^{(2)} = 0$ allowing the previous iterate to be carried over into the next step, if it performs better. A third point $\tilde{s}^{(3)}$ is chosen based on the performance of $\tilde{s}^{(1)}, \tilde{s}^{(2)}$ namely,
\begin{align*}
\tilde{s}^{(3)} = 
\begin{cases}
-\tilde{s}^{(1)}&\text{ if }  \obj(s_k+\tilde{s}^{(1)}) \geq \obj(s_k), \\
0 &\text{ else }.
\end{cases}
\end{align*}
This means, that only when $\tilde{s}^{(1)}$ does not lead to an improvement, additionally $-\tilde{s}^{(1)}$ is added as a candidate. With this approach, the dimension of the search space is equal to the current iteration, i.e., is small at the beginning but grows as the algorithm proceeds. The results obtained with this procedure are significantly better than the ones obtained with the approach considered in \cref{tab:dct}.

In the following, we discuss, how CBO could be adapted in this direction in order to increase its performance. As described above, each mutation in the algorithm employed in \cite{guo2019simple} only modifies a single component in the basis $\{b^{(1)}, \ldots, b^{(d)}\}$, whereas in CBO the noise is Gaussian distributed. One possible modification is to change the noise model such that it only varies in a single basis component. In the practical implementation this amounts to replacing the function \textbf{Noise} in \cref{alg:CBO}. Here, we examine this possibility by employing the function specified in \cref{alg:dctnoise}. One should note that this type of noise model is not covered by the standard analysis of CBO methods and therefore looses interpretability. Furthermore, the noise here is not scaled by the distance to the consensus point, similarly to the CH scheme. The idea of not using the appropriate noise scaling to improve performance has also been explored in \cite{chen2020consensus}.

In \cref{tab:dct}, we observe that this modification greatly improves the performance of the standard CBO algorithm. However, it still performs worse than the CH and NES schemes, and in particular still much worse than the $(1+\lambda)$ strategy reported in \cite{guo2019simple}. For comparison, we re-run the SimBA-DCT algorithm using the code\footnote{\url{https://github.com/cg563/simple-blackbox-attack}} provided by the authors in \cite{guo2019simple}. The results in \cref{tab:dct} cannot reproduce the ones reported in the paper \cite{guo2019simple}, which hints at a possible setup mismatches. For example, possibly we did not attack the same images as in \cite{guo2019simple}.

We conclude that for a simple $\ell^2$ attack problem on CIFAR-10, CBO performs very well. In the more difficult ImageNet setting, CBO fails drastically. A modification in the noise function can improve this, but still does not match the performance of other schemes.

\begin{algorithm}
\caption{DCT-Noise}\label{alg:dctnoise}
\textbf{Input:}  Number of particles $N$
\begin{algorithmic}[1]
\State Let the $\{b^{(1)}, \ldots, b^{(d)}\}$ denote the DCT basis of $\mathcal{X}$.
\State Initialize index sets $I^1, \ldots, I^N$ as permutations of $\{1,\ldots, d\}$
\State $j \gets 0$
\Function{DCTNoise}{$d, \tau$}
\State Initialize $z^{(1)}, \ldots, z^{(N)}$
\For{$n=1,\ldots, N$}
    \State $z^{(n)} = D^{-1}(b^{(I_j)})$
\EndFor
\State $j\gets j + 1$ 
\State \Return $\sqrt{\tau}\cdot z$
\EndFunction
\end{algorithmic}
\end{algorithm}
\begin{remark}
In \cref{alg:dctnoise} we use the same procedure as proposed in \cite{guo2019simple} to create the index sets $I^{(n)}$. Furthermore, note that the function does in fact not use the drift argument $d$.
\end{remark}
\begin{figure}
\begin{subfigure}[t]{.28\textwidth}
\includegraphics[width=\textwidth,trim={1.cm 1cm 1cm 1.cm}, clip]{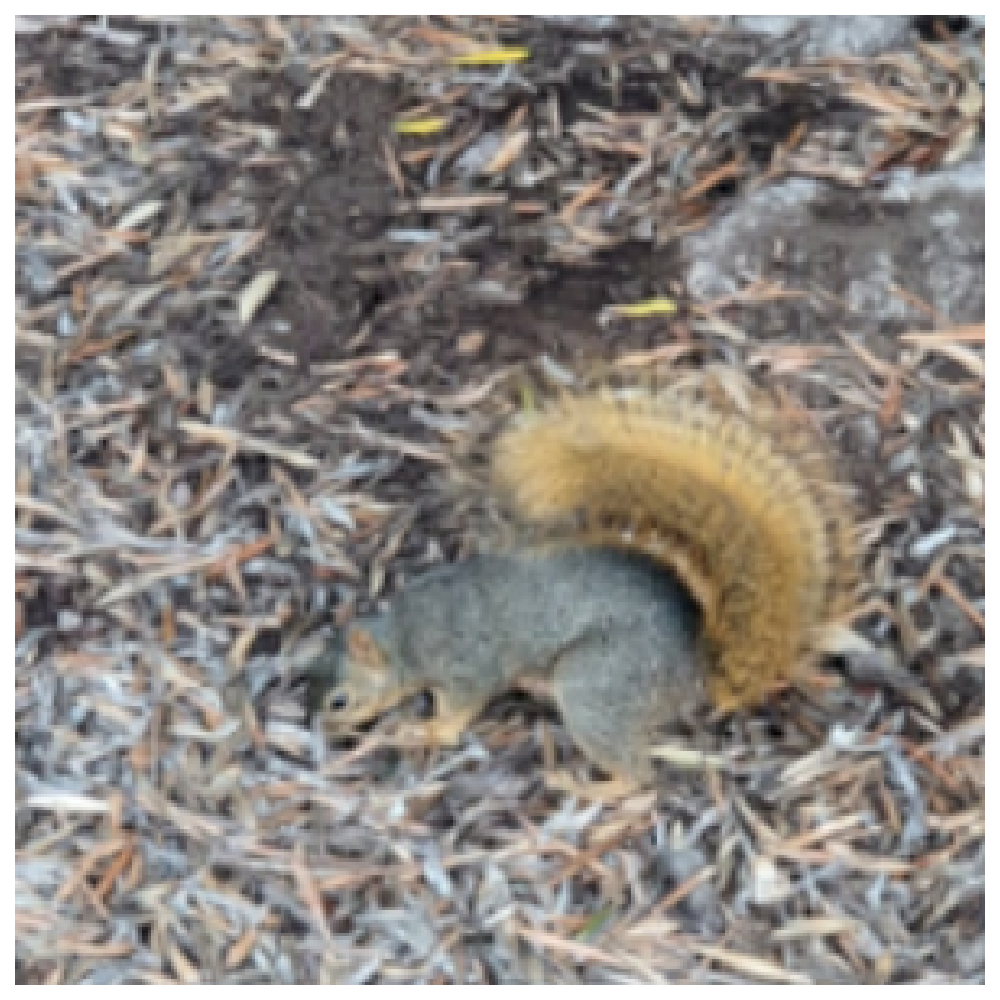}
\caption{Classified as \enquote{Fox squirrel}, with probability $82.9\%$.}
\end{subfigure}\hfill%
\begin{subfigure}[t]{.28\textwidth}
\includegraphics[width=\textwidth,trim={1.cm 1cm 1cm 1.cm}, clip]{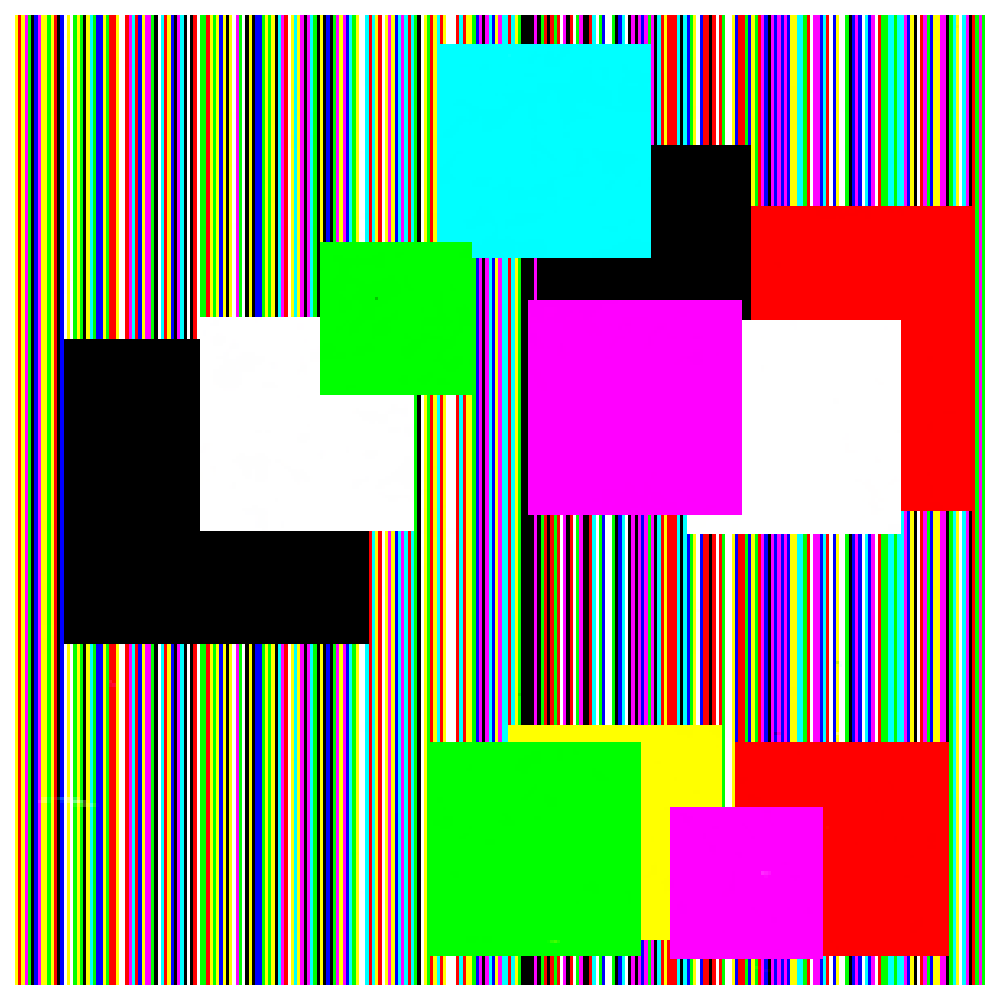}
\caption{Perturbation with $\abs{\pert{}}_\infty \leq 0.05$.}
\end{subfigure}\hfill%
\begin{subfigure}[t]{.28\textwidth}
\includegraphics[width=\textwidth,trim={1.cm 1cm 1cm 1.cm}, clip]{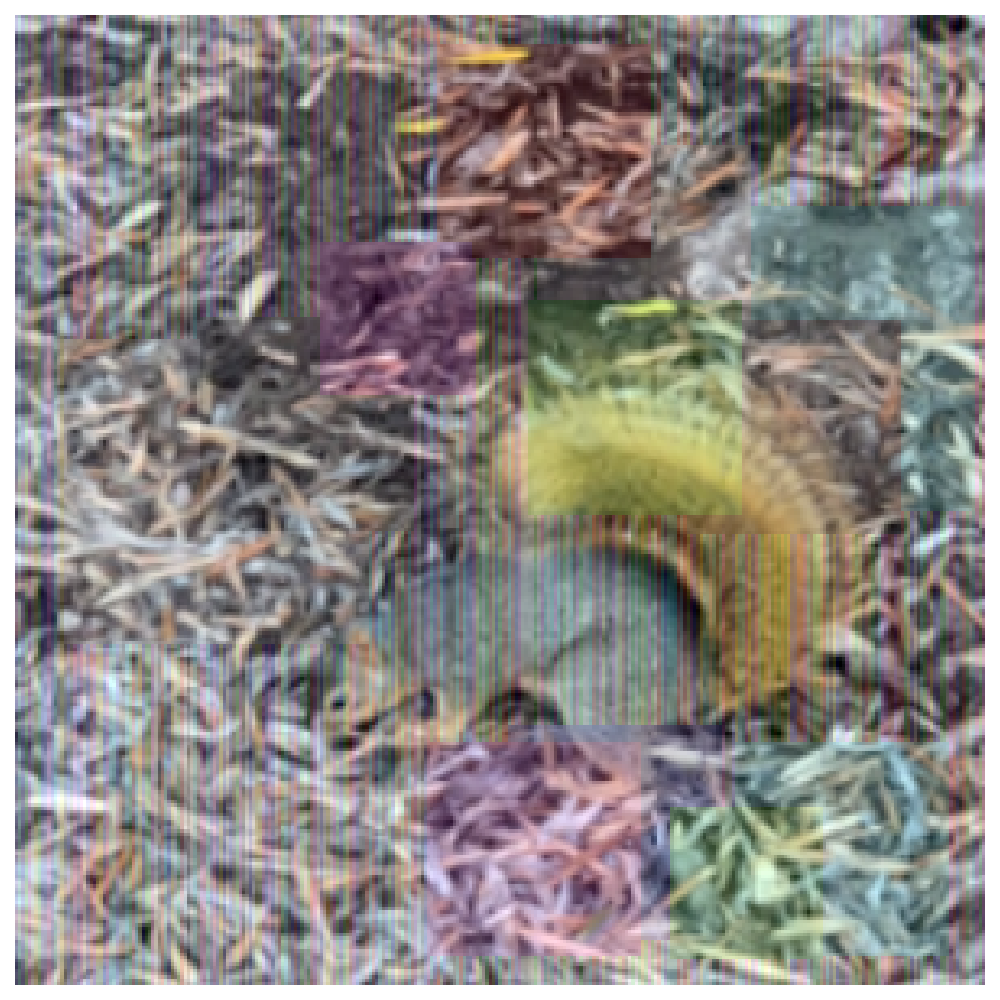}
\caption{Classified as \enquote{Centipede}, with probability $74.8\%$.}
\end{subfigure}\hfill%
\caption{Example of a square attack as proposed in \cite{ACFH2020square}.}\label{fig:square}
\end{figure}

\subsection{Square attacks}\label{sec:square}

We now consider a more efficient attack space introduced in \cite{ACFH2020square}, which produces better results in terms of queries and success rate. The attack is based on the observation that convolutional networks are in particular vulnerable to square-shaped perturbations. Therefore, the authors in \cite{ACFH2020square} proposed to obtain closed-box attacks by adding random squares to the target image. The scheme they employ is in spirit very similar to the one described in \labelcref{eq:latentadd}. Namely, in each step of the algorithm, a random square is drawn from some distribution. Then, based on whether this square reduces the loss, it is actually added to the perturbation. In order to employ similar concepts for standard CBO, we again first choose a fixed parametrization of this attack space. Here, we choose the space
\begin{align*}
\mathcal{S} = \big(\underbrace{[0,1]}_{\text{side length}} \times \underbrace{[0,1] \times [0,1]}_{\text{coordiantes}}\big)^P,
\end{align*}
encoding the $[0,1]$-normalized side length and position coordinates of each square, with a total number of $P$ squares. Additionally, it was noticed in \cite{ACFH2020square}, that initializing a perturbation with random vertical stripes, increases the attack performance significantly. We adapt this idea, by sampling a fixed image $I\in \mathcal{X}$, which contains vertical stripes, see \cref{fig:square}. We define the mapping
\begin{align*}
\tilde{T}(s; x) := 
x +  C\left(\sum_{p=1}^P \beta_p(s_p) + I\right),
\end{align*}
where $C(\cdot) = \min\{\budget, \max\{-\budget, \cdot\}\}$ enforces the $\ell^\infty$ budget constraint and $\beta_p$ are the maps that place the squares at the desired position and scale them by a factor, defined as
\begin{align*}
\beta_p:[0,1]^3&\to \mathcal{X}\\
\beta_p(r)_{c,h,w} &:= \zeta_{p,c} \cdot
\begin{cases}
1  & \text{ if } (h/H, w/W)\in B^\infty_{r_1}(r_2,r_3),\\
0  & \text{else}.
\end{cases}
\end{align*}
Here, $\zeta_{p,c}\in\{-\budget, \budget\}$ encodes the channel value each square contributes to the perturbation, which we sample randomly at initialization and keep fixed. As before, our application map is then obtained as $T=R\circ \tilde{T}$.

In \cref{tab:square} we observe that, as before, with the DCT attack the parametrized variant does not achieve the same performance as the original proposed variant in \cite{ACFH2020square}. We again examine the possibility to exchange the noise model as in \cref{alg:dctnoise}. In this case, 
this can be realized by choosing as the attack space $\LS = \mathcal{X}$ and employing a noise model that adds random squares to each particle. The corresponding pseudocode is displayed in \cref{alg:squarenoise}. In \cref{tab:square} we see, that this greatly improves the performance of CBO, however, still not matches the performance of $(1+1)$-ES scheme in \cite{ACFH2020square}. Nevertheless, within the $\ell^\infty$ norm budget scenario, this still beats the performance of the standard CBO algorithm in the low-resolution setting in \cref{sec:lowres}.

\begin{table}%
\centering%
\footnotesize%
{\bfseries Untargeted square attacks on ImageNet}\\[.5em]
\begin{tabularx}{\textwidth}{p{3cm}*{9}{>{\centering\arraybackslash}X}}
\toprule
\multirow{2}{*}{\bfseries Attack} &
\multicolumn{3}{c}{\bfseries Failure Rate $\downarrow$} &
\multicolumn{3}{c}{\bfseries Average Queries $\downarrow$} &
\multicolumn{3}{c}{\bfseries Median Queries $\downarrow$} \\
\cmidrule(lr){2-4}\cmidrule(lr){5-7}\cmidrule(lr){8-10}
& I & R & V & I & R & V & I & R & V \\
% ------------------
% Main part
% ------------------
\midrule
NES    & 29.6$ \%$ & 17.2$ \%$ & 14.9$ \%$ & 3267.3 \tiny (5256.0) & 2700.2 \tiny (3945.9) & 2035.2 \tiny (3221.5) & 3979 & 3239 & 1\\
CH     & 31.5$ \%$ & 18.1$ \%$ & 17.7$ \%$ & 3277.0 \tiny (5384.5) & 2807.8 \tiny (4087.5) & 2022.0 \tiny (3433.6) & 3979 & 3112 & 1\\
CBO   & 20.9$ \%$ & 9.5$ \%$ & 4.5$ \%$ & 379.8 \tiny (2388.3) & 292.5 \tiny (1213.7) & 182.2 \tiny (623.6) & 20 &  10 & 10\\
\midrule
%CBO-LN   & 0.8$ \%$ & 0.0$ \%$ & 0.0$ \%$ & 287.4 \tiny (355.1) & 113.0 \tiny (113.0) & 42.1 \tiny (42.1) & 40 & 20 & 10\\
% 
CBO (square noise) & 1.2$ \%$ & 0.0$ \%$ & 0.0$ \%$ & 226.8 \tiny (344.1) & 96.2 \tiny (96.2) & 39.9 \tiny (39.9) & 30 & \bfseries 10 & 5\\
\midrule
Square Attack \cite{ACFH2020square} &  \bfseries 0.3$\%$ & 0.0 $\%$ & 0.0 $\%$ & \bfseries 197. & \bfseries 73. & \bfseries 31. & \bfseries 24 & \bfseries 11 & \bfseries  1 \\
\bottomrule
\end{tabularx}
\caption{Performance of different optimizers for the untargeted attack problem using the square attack space in \cref{sec:square}. For NES, CH and CBO we use $P=50$ squares. For CBO with the square noise model, we use a lower number of particles, namely $N=10$ (compared to $N=50$ otherwise). Furthermore, we choose $\eta=0.01$ (compared to $\eta=0.1$ otherwise), to enforce higher values of $\alpha$, see \cref{sec:appsquare}}\label{tab:square}
\end{table}

\begin{algorithm}
\caption{Square-Noise}\label{alg:squarenoise}
\textbf{Input:}  Number of particles $N$
\begin{algorithmic}[1]
\Function{SquareNoise}{$d, \tau$}
\For{$n=1,\ldots, N$}
    \State Sample a random square $s$
    \State $z^{(n)} = s$
\EndFor
\State \Return $\sqrt{\tau}\cdot (z^{(1)}, \ldots, z^{(N)})$
\EndFunction
\end{algorithmic}
\end{algorithm}

\begin{remark}
In \cref{alg:squarenoise} we use the same procedure as proposed in \cite{ACFH2020square} to sample random squares in the image space $\X$. In particular, we employ the same rule to adapt the square sizes throughout the iteration. Again, the function does in fact not use the drift argument $d$.
\end{remark}
\subsection{Attacks on adversarially trained networks}\label{sec:advtrain}

In this section we evaluate the performance of CBO, when attacking robust models, obtained with adversarial training \cite{madry2017towards}. We consider a challenge \cite{madrylab_mnist_challenge}\footnote{
The details on this challenge can be found in the following  repository: \url{https://github.com/MadryLab/mnist_challenge}
} 
on the MNIST dataset \cite{lecun1998gradient}, where an adversarially robust classifier was trained. One considers an $\ell^\infty$ attack space with a norm budget of $\budget=0.3$, however the challenge as specified in \cite{madrylab_mnist_challenge} did not constrain the query budget $Q$. For reference, the authors in \cite{ACFH2020square} use $50$ restarts of the square attack with $20,000$ queries for each image, which results in a budget of $Q=1,000,000$. Given some test set $\mathcal{T}$, the robust accuracy of a model $\net$ is defined as 
\begin{align*}
R(\net) := 
\frac{1}{\abs{\mathcal{T}}} \sum_{(x,\cidx)\in\mathcal{T}}
\min_{\tilde{x}\in B_\budget(x)} 
%\left(
\bm{\delta}_{\cidx, \net^{\text{MLE}}(\tilde{x})}.
%\right)
\end{align*}
The robust accuracy associated with a certain attack is then given as the above value, when replacing $\min_{\tilde{x}\in B_\budget(x)} 
%\left(
\bm{\delta}_{\cidx, \net^{\text{MLE}}(\tilde{x})}$
by $\bm{\delta}_{\cidx, \net^{\text{MLE}}(x^*)}$, where $x^*$ is the output of the attack. In \cref{tab:mnist} we compare the performance of CBO in this setting to other closed- and open box attacks. Like \cite{ACFH2020square} we also consider multiple runs of the algorithm. However, for CBO we noticed that letting the algorithm run for longer can achieve successful attacks in certain cases. In total, we use $1,000,000$ queries spread out over 5 repeats, with query budgets ranging from $10,000$ to $400,000$. For the attack space, we use the same setup from \cref{sec:cifar}, i.e., the direct attack on the image space.

\begin{table}%
\centering%
\footnotesize%
{\bfseries Untargeted attacks on the MNIST challenge}\\[.5em]
\begin{tabularx}{.5\textwidth}{lc}
\toprule
% ------------------
% Main part
% ------------------
Attack & Robust Accuracy $\downarrow$  \\
\midrule
\multicolumn{2}{c}{\bfseries Open-box attacks}\\[.5em]
Guided Local Attack                & \bfseries 88.0 $\%$  \\
Open-box challenge winner (2017)   & $89.62 \%$ \\
Projected Gradient Descent (PGD)          & $89.62\%$  \\
\midrule%
\multicolumn{2}{c}{\bfseries Closed-box attacks}\\[.5em]
Square Attack \cite{ACFH2020square} & \bfseries 88.25 $\%$\\
SignHunter \cite{weigand2024adversarialflowsgradientflow} & $91.47\%$ \\
Closed-box challenge winner (2017) \cite{xiao2018generating}  & $92.76 \%$ \\
\midrule%
CBO &  $88.53 \%$\\
\bottomrule%
\end{tabularx}
\caption{Performance of different optimizers for the untargeted attack problem on adversarially trained networks. While CBO does not match the performance of the square attack, it has a higher success rate than the original challenge winners, both for the closed-box and white-box attacks. In particular, it outperforms the standard projected gradient descent attack. This attack tries to solve the same underlying optimization problem, since for CBO we employed the direct attacks space from \cref{sec:cifar}. This highlights, that CBO can beat gradient-based methods in this case.
}\label{tab:mnist}
\end{table}

\section{Conclusion and outlook}

In this work, we established a connection between consensus-based optimization and evolution strategies. We showed that a variant of CBO, namely consensus hopping introduced in \cite{riedl2023gradient} is equivalent to so-called natural evolution strategies, up to second order error terms. We examined the performance of CBO for closed-box adversarial attacks, where evolutionary algorithms are proven to be an efficient strategy. The performance is measured in terms of success rate of the attacks, and number of required queries. We considered untargeted and targeted attacks, with different attack spaces, including direct attacks on the image space, low-resolution, $P$-pixel, spectral and square attacks. With all attacks, we observed that CH and NES perform very similarly, as suggested by our theory. Furthermore, especially in easier attack scenarios, CBO can outperform existing strategies, thus not only offering a mathematically attractive, but also a numerically competitive zero-order optimizer. However, for more difficult scenarios, like targeted attacks, CBO fails to achieve the same benefit. Moreover, for certain attack spaces like spectral and square attacks, it is not directly possible to match the performance of typical $(1+\lambda)$-evolutionary strategies. We explored a modification in the CBO noise term, to mitigate this effect, which lowers the mentioned gap, but does not close it completely.

An evident question for future work, is a more precise characterization of the interplay between ES and CBO. While we offer first results in this direction, the concrete connection between CBO and NES still remains not fully clear, both from a theoretic and practical point of view. Beyond that, the relation to more advanced ES like CMA-ES, is a challenging task we intend to study. 

Furthermore, in this work, we only addressed adversarial attacks on image classification tasks. An interesting direction would be to also apply CBO for attacks on large models, see, e.g., \cite{zou2023universal}. Due to the discrete nature of input tokens, the problem of closed-box attacks differs from the image classification setting. However, in open-box scenarios, so-called soft-prompt attacks solve an optimization task very similar to \labelcref{eq:adv}, see e.g., \cite{schwinn2024soft}. Since the results in \cref{sec:advtrain} indicate that CBO can beat open-box attacks in certain scenarios, one could investigate whether CBO can improve such gradient-based attacks.
\section*{Acknowledgment}
\addcontentsline{toc}{section}{Acknowledgment}

TR acknowledges support from DESY (Hamburg,
Germany), a member of the Helmholtz Association HGF. This research was
supported in part through the Maxwell computational resources operated
at Deutsches Elektronen-Synchrotron DESY, Hamburg, Germany. Parts of this study was carried out, while TR was visiting the group of FH at the California institute of technology, supported by the DAAD grant for project 57698811 \enquote{Bayesian Computations for Large-scale (Nonlinear) Inverse
Problems in Imaging} and the host FH. TR further wants to thank Samira Kabri for many insightful discussions. 
LB and TR acknowledge funding by the German Ministry of Science and Technology (BMBF) under grant agreement No. 01IS24072A (COMFORT).
LB also acknowledges funding by the Deutsche Forschungsgemeinschaft (DFG, German Research Foundation) – project number 544579844 (GeoMAR).

\printbibliography[heading=bibintoc]

\appendix
\section{Details on the implementation}\label{sec:details}
In this section, we give details on the implementation and hyperparameters for the algorithms used in our experiments.

\subsection{Details on CBO}

For CBO we use the algorithm displayed in \cref{alg:CBO}. If not specified otherwise, we always choose the hyperparameters
$\tau = 1.3, \sigma=1$. The parameter $\alpha$ is chosen adaptively with the effective ensemble size scheduler from \cite{carrillo2022consensus}. The concrete implementation is taken from \cite[Alg. 6]{bungert2025mirrorcbo}. This algorithm introduces an additional hyperparameter $\eta\in[0,\infty)$, where $\eta<<1$ results in larger values of $\alpha$. If not specified otherwise, we choose $\eta=0.1$ in our experiments.

Furthermore, for the function \textbf{ComputeConsensus} in \cref{alg:CBO}, we employ the mini-batch scheme proposed in \cite{carrillo2021consensus}. This means that we do not evaluate the consensus point on all $N$ particles but in each step randomly take a subset of $b$ particles on which it is evaluated. The concrete implementation is taken from \cite[Alg. 9]{bungert2025mirrorcbo}. If not specified otherwise, we choose $N=50$ and $b=10$. For completeness, we provide the un-batched variant in \cref{alg:consensus}, which is based on the LogSumExp function.
\begin{algorithm}[H]
\caption{Computes the consensus point with a LogSumExp trick.}\label{alg:consensus}
\begin{algorithmic}[1]
\Function{ComputeConsensus}{$\en{x}, \alpha>0$}
    \State $\en{c} = \exp(- \alpha f(\en{x}) -  \textbf{LogSumExp}(-\alpha f(\en{x})))$
    \State \Return $\sum_{n=1}^N c^{(n)}\, x^{(n)}$
\EndFunction
\end{algorithmic}
\end{algorithm}
Furthermore, by default we use the anisotropic noise model as proposed in \cite{carrillo2021consensus}, with the concrete implementation of $\textbf{Noise}$ taken from \cite[Alg. 4]{bungert2025mirrorcbo}.

In the following, we use the function \textbf{Terminate} to indicate a termination criterion. In our concrete case, we terminate in two cases, namely if the query budget is exhausted, or the current best iterate is adversarial. For CBO, in the untargeted setting, the latter is fulfilled once a particle attains a negative loss value. For the targeted setting, we can use a similar criterion, by modifying the cross-entropy loss $\ell$ as follows,
\begin{align*}
\tilde{\ell}(y,\cidx) = \ell(y, \kappa) - M \cdot \bm{\delta}_{y^{\text{MLE}},\cidx}
\end{align*}
where we choose $M=10$. Note that this does not modify the inputs $y$ which are counted as adversarial, but only their loss value. For NES and CH, we use one extra query in every iteration to check whether the current iterate is adversarial.
\begin{algorithm}
\caption{Consensus-based optimization}\label{alg:CBO}
\textbf{Input:}  Initial ensemble $\en{x}_{0}$, hyperparameters $\sigma\geq 0, \alpha, \tau>0$ 
\begin{algorithmic}[1]
\While{\textbf{Not Terminate()}}
    \State $\en{c}_{k+1} = \textbf{ComputeConsensus}(\en{x}_{k}, \alpha_{k})$
    \State $\tilde{\en{x}}_{k} = \en{x}_{k} - \tau\, (\en{x}_{k} -  \en{c}_{k}) + \sigma \,\textbf{Noise}(\en{x}_{k} -  \en{c}_{k}, \tau)$
    \State $\en{x}_{k+1} = \textbf{Projection}(\tilde{\en{x}}_{k})$
    %\State \textbf{PostStepRoutine()}
\EndWhile
\end{algorithmic}
\end{algorithm}

\subsection{Details on CH and NES}

For CH and NES we use the algorithm proposed in \cite{ilyas2018black}, which is displayed in \cref{alg:CHNES}.

\begin{algorithm}
\caption{Consensus Hopping and natural evolution strategy}\label{alg:CHNES}
\textbf{Input:}  Initial guess $x_{0}$, hyperparameters $\sigma\geq 0,\eta>0$ 
\begin{algorithmic}[1]
\While{\textbf{Not Terminate()}}
\State $\en{s} = \textbf{Noise}()$
\State $g_k = \textbf{EstimateGradient}(x + \sigma \en{s})$
\State $\hat{g}_k = \textbf{NormalizeGradient}(g_k)$
\State $\tilde{x}_{k+1} = x_k - \eta\, \textbf{GradientStep}(\hat{g})$
\State $x_{k+1} = \textbf{Projection}(\tilde{\en{x}}_{k})$
\EndWhile
\end{algorithmic}
\end{algorithm}

Compared to \cref{alg:CBO} the \textbf{Noise} function employs a fixed a noise scale. Furthermore, as in \cite{ilyas2018black} we employ antithetic sampling, see \cref{alg:NoiseCH}.
\begin{algorithm}[H]%
\caption{The antithetic noise for \cref{alg:CHNES}}\label{alg:NoiseCH}
\begin{algorithmic}[1]%
\Function{AntitheticNoise}{}
    \State $z^{(1)}, \ldots, z^{(\lceil N/2 \rceil)} \sim \mathcal{N}(0, I_{d\times d})$
    \State $z^{(\lceil N/2 \rceil +i)} = -z^{(i)}$ for $i=1,\ldots, z^{(\lfloor N/2 \rfloor)}$
    \State \Return $(z^{(1)}, \ldots, z^{(N)})$
\EndFunction
\end{algorithmic}
\end{algorithm}%
The function \textbf{EstimateGradient} is the only difference between CH and NES. For CH we choose this function to be \textbf{ComputeConsensus}, with a fixed parameter of $\alpha=10$. For NES, we choose the gradient estimation given in \cref{eq:NESGrad}. 

The algorithm proposed in \cite{ilyas2018black} borrows concepts from open-box attacks. These schemes often employ a gradient normalization to ensure faster convergence. E.g., in \cite{goodfellow2014explaining} for attacks with an $\ell^\infty$-norm budget, it is proposed to normalize the gradient in the $\ell^\infty$ metric. The underlying intuition is that adversarial examples are expected to be in the corners of $B_\budget(x)$. We also refer to \cite{weigand2024adversarialflowsgradientflow} to a theoretical study on the effect of this normalization. As in \cite{ilyas2018black}, we adapt this gradient normalization in \cref{alg:CHNES}. However, in all our experiments we obtained better results by employing $\ell^2$-normalization instead of $\ell^\infty$ . Therefore, throughout all experiments, $\textbf{NormalizeGradient}$ is chosen to be a $\ell^2$ normalization.

The function \textbf{GradientStep} gives the freedom, to additionally introduce momentum into the descent scheme. This is done in \cite{ilyas2018black}, which we adapt.

\section{Further numerical examples}
\subsection{Further experiments on low resolution attacks}
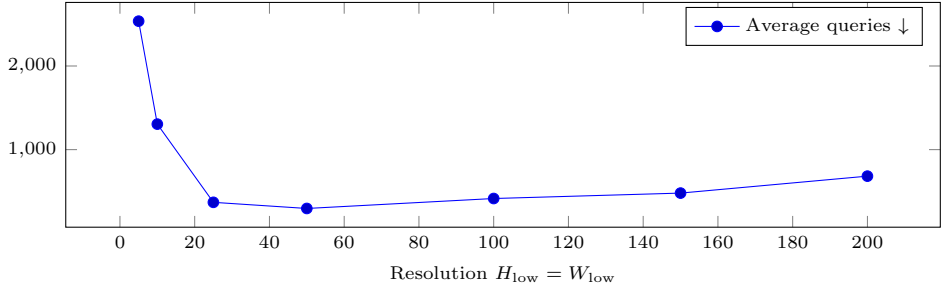
\begin{figure}
\centering%
\begin{tikzpicture}
\begin{filecontents*}{numres.csv}
n p
5 2536.0
10 1305.0
25 370.0
50 296.6
100 415.6
150 480.4
200 683.0
\end{filecontents*}
\begin{axis}[tinyticks, width=.8\textwidth,height=.2\textheight,
xlabel = {Resolution $H_{\text{low}}=W_{\text{low}}$},
scaled y ticks=false,
yticklabel style={/pgf/number format/fixed, /pgf/number format/precision=0}
]
%\addplot table [x=Wlow, y=median, col sep=space] {results/lowres/res.csv};\label{median}
\addplot table [x=n, y=p, col sep=space] {numres.csv};\label{avg}
%
%
%\addlegendimage{/pgfplots/refstyle=rate}\addlegendentry{Failure Rate}
%\addlegendimage{/pgfplots/refstyle=median}\addlegendentry{Median queries}
\addlegendimage{/pgfplots/refstyle=avg}\addlegendentry{Average queries $\downarrow$}
\end{axis}
\end{tikzpicture}
\caption{Average queries of CBO for unpargeted attacks on ImageNet, when varying the low resolution of the attack space in \cref{sec:lowres}.}
\label{fig:diffres}
\end{figure}
We first examine the attack performance of CBO in the low resolution scenario, when varying the dimension of the latent space $\LS = [0,1]^{C\times H_{\text{low}} \times W_{\text{low}}}$. We plot the results for different $H_{\text{low}}=W_{\text{low}}$ in \cref{fig:diffres}. For each resolution, we attack the same randomly sampled $50$ images of the ImageNet dataset, employing the (V) architecture. We choose a maximum query budget of $Q=10,000$ and report the average number of queries on all runs, i.e., each unsuccessful run contributes with $Q$ to the average.

Furthermore, in \cref{tab:lowresmore}, we extend \cref{tab:lowres} by adding more comparisons to other query-based closed-box attacks in an $\ell^\infty$ scenario with norm budget $\budget=0.05$ and query budget $Q=10,000$. Regarding the results shown therein, we remark the following:
\begin{itemize}
\item All results from \cite{meunier2019yet} were taken from the paper. We only use the results with similar failure rates as CBO. The underscore $d$ denotes the discrete attack mode described therein. This attack space was not used for CBO, therefore, the results are not directly comparable. Furthermore, one observes that the Cauchy(1+1)-ES strategy can outperform CBO in certain scenarios, however, has a higher failure rate for (I) and (R). Interestingly, this optimizer does not perform well in the targeted setting, similarly to CBO. Here, we leave a more detailed comparison between (1 + 1) schemes \cite{back2018evolutionary}  with CBO for future study. However, in particular, using Cauchy noise as proposed in \cite{yao1996fast} might be an interesting extension for consensus schemes.
\item For ZO-NGD we take the results from \cite{zhao2020towards}. The results for ZOO attack \cite{chen2017zoo} were also taken from \cite{zhao2020towards} which re-run the original implementation in our desired setting.
\item The results for Bandits \cite{ilyas2018prior} were taken from the original paper.
\item The results for the parsimonious attack \cite{moon2019parsimonious} were taken from the original paper.
\item The results for SignHunter \cite{al2019there} were taken from the original paper.
\item The results for the square attack \cite{ACFH2020square} were taken from the original paper. So far, we always made an additional distinction between different attack spaces. Here, we now mainly use the $\ell^\infty$ norm budget to compare between different optimizers. As mentioned before, the square attack space is significantly more effective than the attack spaces of the other optimizers displayed in this table and in a way not comparable to those. Nevertheless, we include it in the comparison here.
\end{itemize}
\begin{table}
\footnotesize%
\centering%
{\bfseries Untargeted $\ell^\infty$ attacks on ImageNet}\\[.5em]
\begin{tabularx}{\textwidth}{p{4.cm}*{9}{>{\centering\arraybackslash}X}}
\toprule
\footnotesize%
\multirow{2}{*}{\bfseries Attack} &
\multicolumn{3}{c}{\bfseries Failure Rate $\downarrow$} &
\multicolumn{3}{c}{\bfseries Average Queries $\downarrow$} &
\multicolumn{3}{c}{\bfseries Median Queries $\downarrow$} \\
\cmidrule(lr){2-4}\cmidrule(lr){5-7}\cmidrule(lr){8-10}
& I & R & V & I & R & V & I & R & V \\
% ------------------
% Main part
% ------------------
\midrule
NES        & 1.6$\%$ & 0.2$\%$ & 0.1$\%$ &  1388.4 \tiny (1509.5) & 1044.4 \tiny (1053.8) & 728.9 \tiny (732.7) & 1072 & 715 & 154 \\
CH & 1.6$\%$ &  0.2 $\% $ & 0.1$\%$ & 1389.5 \tiny (1517.2) & 1044.3 \tiny (1053.4) & 729.6 \tiny (734.2) &  1123 & 715 & 154 \\
CBO        &  1.5$\%$  & 0.1$\%$ & 0.01$\%$ &    416.7 \tiny (560.5)  &  250.3 \tiny (259.0) &  
 139.6 \tiny (143.6) &   120  &   70 &  10 \\
\midrule
DFO$_c$ -- DiagonalCMA \cite{meunier2019yet} & 2.8$\%$ & 1.0$\%$ & 0.1$\%$ & 533 & 263 & 174 & 189 & 95 & 55 \\
DFO$_c$ -- CMA \cite{meunier2019yet}         & 0.8$\%$ & \bfseries 0.0$\%$ & 0.1$\%$ & 630 & 270 & 219 & 259 & 143 & 107 \\
DFO$_c$ -- Cauchy(1 + 1)-ES \cite{meunier2019yet}  & 2.7$\%$ & 0.4$\%$ & \bfseries 0.0$\%$ & 510 & 218 & 67  &  63 & 32 & 4 \\
DFO$_d$ -- DiagonalCMA \cite{meunier2019yet} &  2.3$\%$ & 1.2$\%$ & 0.5 $\%$  & 424 &  417 &  211 &  20&   20&   2  \\
\midrule%
ZO-NGD \cite{zhao2020towards} & 3.0$\%$ & --- & --- & 582 & --- & --- & --- & --- & --- \\
\midrule%
ZOO attack \cite{chen2017zoo} & 1.1$\%$ & --- & --- & 16,800 & --- & --- & --- & --- & --- \\
\midrule%
Bandits$_{\text{TD}}$ \cite{ilyas2018prior} & 4.6$\%$ &3.4$\%$ & 8.4$\%$ & 1117 & 722 & 370 & --- & --- & --- \\
\midrule%
SignHunter \cite{al2019there} & 1.0$\%$ &0.1$\%$ & 0.3$\%$ & 471 & 129 & 95 & 95 & 39 & 43 \\
\midrule%
Parsimonious \cite{moon2019parsimonious} & 1.5$\%$ &--- & --- & 722 & --- & --- & 237 & --- & --- \\
\midrule%
Square Attack \cite{ACFH2020square} &  \bfseries 0.3$\%$ & \bfseries 0.0 $\%$ & \bfseries 0.0 $\%$ & \bfseries 197. & \bfseries 73. & \bfseries 31. & \bfseries 24 & \bfseries 11 & \bfseries  1 \\
\bottomrule
\end{tabularx}
\caption{Performance of different optimizers for the untargeted attack problem using the low-resolution attack space in \cref{sec:lowres}. This is an extension of \cref{tab:lowres}, which compares the performance of CBO to more and also not directly related optimizers.}\label{tab:lowresmore}
\end{table}

\subsection{Further examination on the failure of CBO in the targeted ImageNet setting}\label{sec:tarfail}

In this section we analyze the targeted attack scenario in \cref{sec:lowres}, where CBO did not perform well. In \cref{fig:lowreshist} we compare the distribution of query counts that the different runs used. One observes that compared to CH and NES, CBO can achieve quick convergence more frequently. However, especially in the hard cases, CBO needs more queries, which results in an overall higher average and mean as observed in \cref{tab:low_res_tar}.
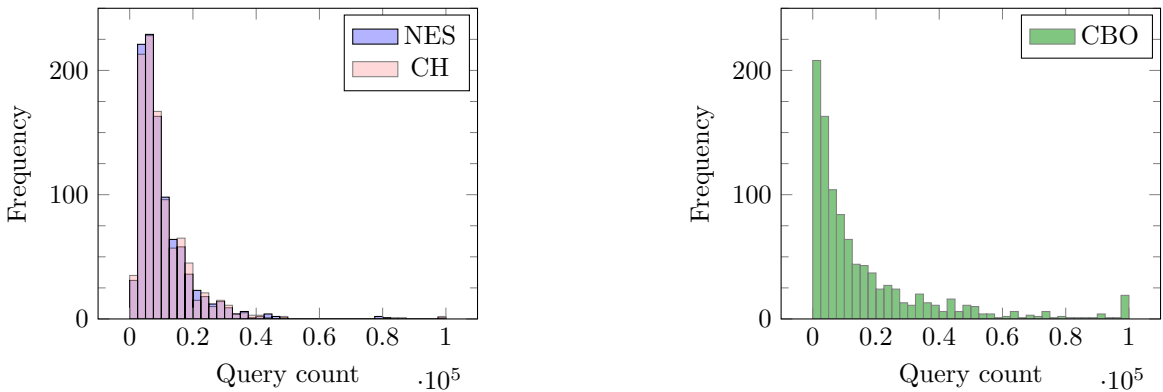
\begin{figure}
\begin{subfigure}[t]{.45\textwidth}
\centering%
\begin{tikzpicture}
\begin{axis}[
    ymin=0,
    ymax=250,
    minor y tick num = 3,
    area style,
    height = .25\textheight,
    xlabel = Query count,
    ylabel = Frequency,
    %scaled ticks=false,
    %tick label style={/pgf/number format/fixed} 
    ]
\addplot+[ybar interval,mark=none, draw=black] plot coordinates {%
(0.0, 31) (2500.0, 221) (5000.0, 229) (7500.0, 163) (10000.0, 98) (12500.0, 64) (15000.0, 58) (17500.0, 36) (20000.0, 23) (22500.0, 18) (25000.0, 12) (27500.0, 14) (30000.0, 9) (32500.0, 4) (35000.0, 6) (37500.0, 1) (40000.0, 2) (42500.0, 4) (45000.0, 2) (47500.0, 1) (50000.0, 0) (52500.0, 0) (55000.0, 0) (57500.0, 0) (60000.0, 0) (62500.0, 0) (65000.0, 0) (67500.0, 0) (70000.0, 0) (72500.0, 0) (75000.0, 0) (77500.0, 2) (80000.0, 1) (82500.0, 0) (85000.0, 0) (87500.0, 0) (90000.0, 0) (92500.0, 0) (95000.0, 0) (97500.0, 1) (1e5,0)%
};
\addlegendentry{NES}
\addplot+[ybar interval,mark=none,opacity=0.5, draw=black] plot coordinates {%
(0.0, 35) (2500.0, 213) (5000.0, 228) (7500.0, 167) (10000.0, 96) (12500.0, 57) (15000.0, 65) (17500.0, 45) (20000.0, 15) (22500.0, 21) (25000.0, 10) (27500.0, 15) (30000.0, 11) (32500.0, 4) (35000.0, 5) (37500.0, 3) (40000.0, 3) (42500.0, 1) (45000.0, 0) (47500.0, 2) (50000.0, 0) (52500.0, 0) (55000.0, 0) (57500.0, 0) (60000.0, 0) (62500.0, 0) (65000.0, 0) (67500.0, 0) (70000.0, 0) (72500.0, 0) (75000.0, 0) (77500.0, 0) (80000.0, 0) (82500.0, 1) (85000.0, 1) (87500.0, 0) (90000.0, 0) (92500.0, 0) (95000.0, 0) (97500.0, 2) (1e5,0)
};
\addlegendentry{CH}
\end{axis}%
\end{tikzpicture}%
\end{subfigure}\hfill%
\begin{subfigure}[t]{.45\textwidth}
\centering%
\begin{tikzpicture}
\begin{axis}[
    ymin=0,
    ymax=250,
    minor y tick num = 3,
    area style,
    height = .25\textheight,
    xlabel = Query count,
    ylabel = Frequency
    %scaled ticks=false,
    %tick label style={/pgf/number format/fixed} 
    ]
\addplot+[ybar interval,mark=none, fill=Green4, draw = black, opacity=0.5] plot coordinates {%
(0.0, 208) (2500.0, 163) (5000.0, 104) (7500.0, 84) (10000.0, 64) (12500.0, 44) (15000.0, 43) (17500.0, 37) (20000.0, 24) (22500.0, 27) (25000.0, 24) (27500.0, 13) (30000.0, 11) (32500.0, 20) (35000.0, 13) (37500.0, 11) (40000.0, 6) (42500.0, 16) (45000.0, 6) (47500.0, 11) (50000.0, 10) (52500.0, 4) (55000.0, 4) (57500.0, 1) (60000.0, 2) (62500.0, 6) (65000.0, 1) (67500.0, 3) (70000.0, 2) (72500.0, 6) (75000.0, 1) (77500.0, 2) (80000.0, 1) (82500.0, 1) (85000.0, 1) (87500.0, 1) (90000.0, 4) (92500.0, 1) (95000.0, 1) (97500.0, 19) (1e5,0)
};
\addlegendentry{CBO}
\end{axis}%
\end{tikzpicture}%
\end{subfigure}
\caption{Query distribution for the targeted attacks on ImageNet in the low resolution setting of \cref{tab:low_res_tar}. The histogram displays the queries obtained in the runs on the Inception architecture in \cref{tab:low_res_tar}.}\label{fig:lowreshist}
\end{figure}

Furthermore, in \cref{fig:pca} we compare the behavior of the consensus point between CH and CBO for a single attacked image. The low resolution attack space here is chosen as $\LS=[0,1]^{3 \times 20 \times 20}$. In order to visualize the behavior in this space with dimension $d=1200$, we follow the strategy used in \cite{li2018visualizing}. For each displayed run, we consider the sequence of consensus points $C=(c^{(0)}, \ldots, c^{(L)})$, which we then center around the end point, i.e., $\tilde{C} = (\tilde{c}^{(0)},\ldots,\tilde{c}^{(L)}) = C - c^{(L)}$. Furthermore, we subtract the data mean and then  consider $\bar{C} = \tilde{C} - \frac{1}{L}\sum_{k=0}^L \tilde{c}^{(k)}$. This sequence can be interpreted as a matrix $\bar{C} = (\bar{c}^{(0)},\ldots,\bar{c}^{(L)})\in\R^{d\times L}$, which allows us to perform a principled component analysis. Denoting by $V:\R^d\to\R^{\min\{d, L\}}$ the orthogonal operator that projects an element in $\R^d$ to its principal component representation, we can visualize the first two entries of $V(\bar{c}^{(k)})$ in \cref{fig:pca}. Moreover, we can also compute the error between $\tilde{c}^{(k)}$ and the projection onto the first two principal components, which is displayed as a gray value. The underlying loss landscape is obtained by evaluating the original objective function $\ell((f(\cdot), y)$ on the points $x^{i,j} = c^{(L)} + t_i v_1 + s_j v_2$, where $t_i, s_j\in\R$ are scalar coefficients and $v_1, v_2$ are the first two rows of the matrix representation of $V$. Furthermore, using the singular values $\sigma_i$ of $\bar{C}$ we can assign an importance score to each component as $\sigma_i^2/\sum_{j=1}^{\min{d,L}} \sigma_j^2$.
\begin{figure}
\centering%
\begin{subfigure}{.45\textwidth}
\includegraphics[width=\textwidth]{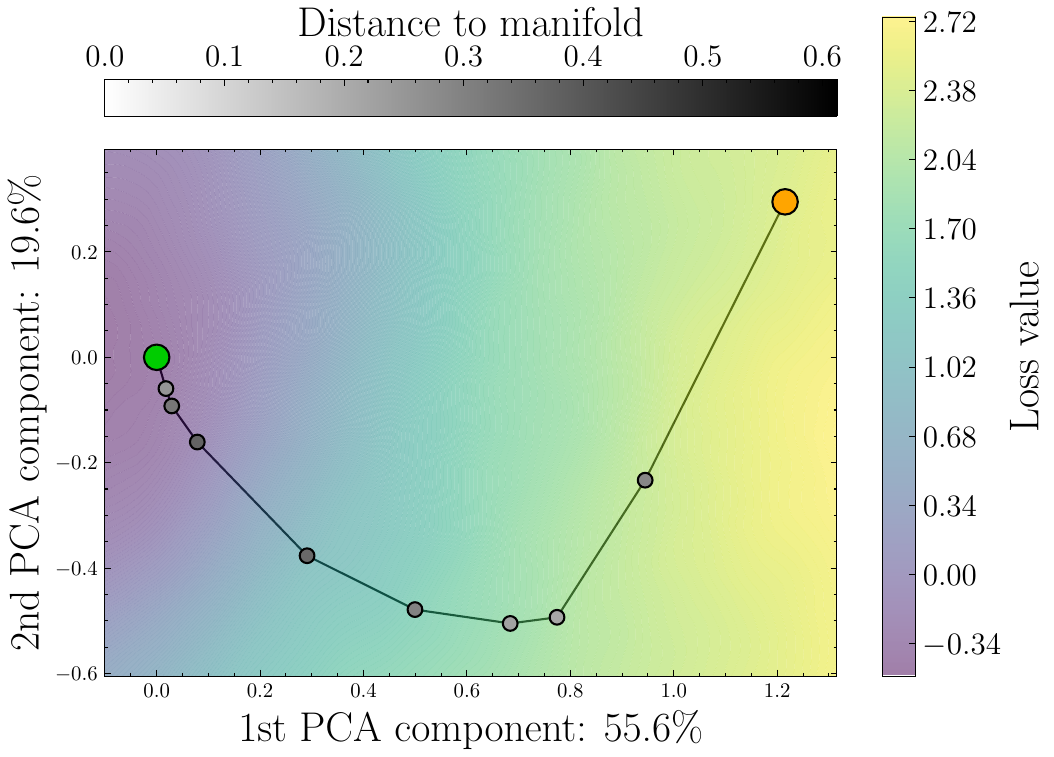}%
\caption{CBO, untargeted}
\end{subfigure}
\hfill%
\begin{subfigure}{.45\textwidth}
\includegraphics[width=\textwidth]{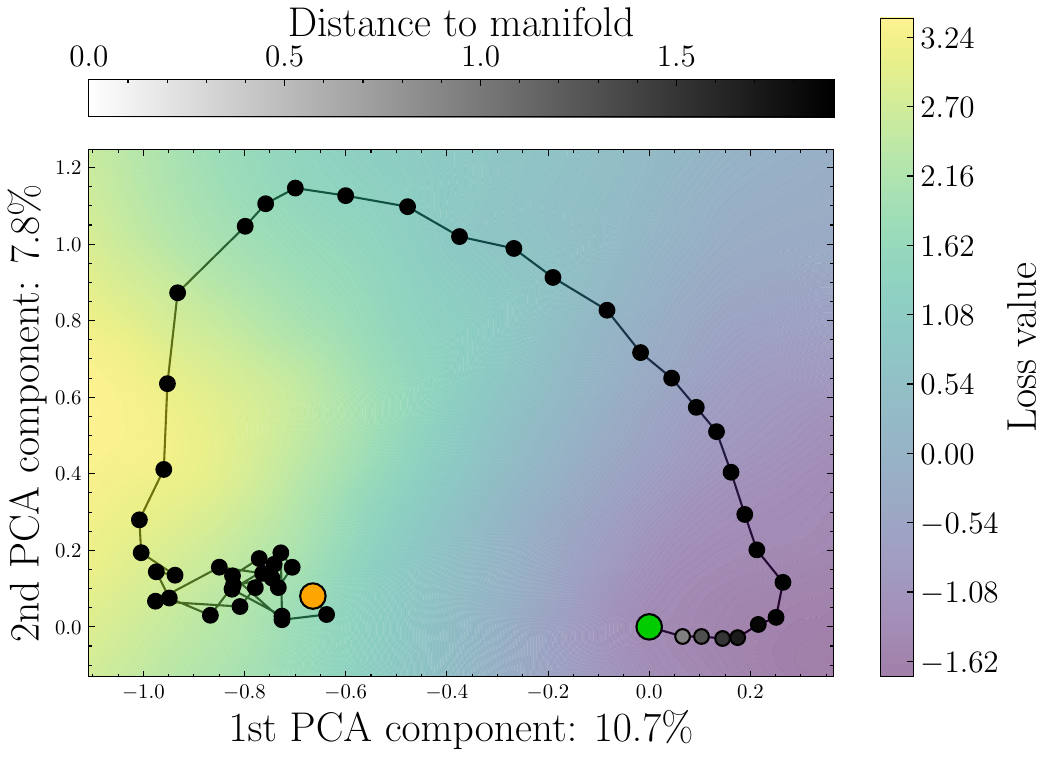}%
\caption{CH, untargeted}
\end{subfigure}
\\%
\vspace*{1cm}%
\begin{subfigure}{.45\textwidth}
\includegraphics[width=\textwidth]{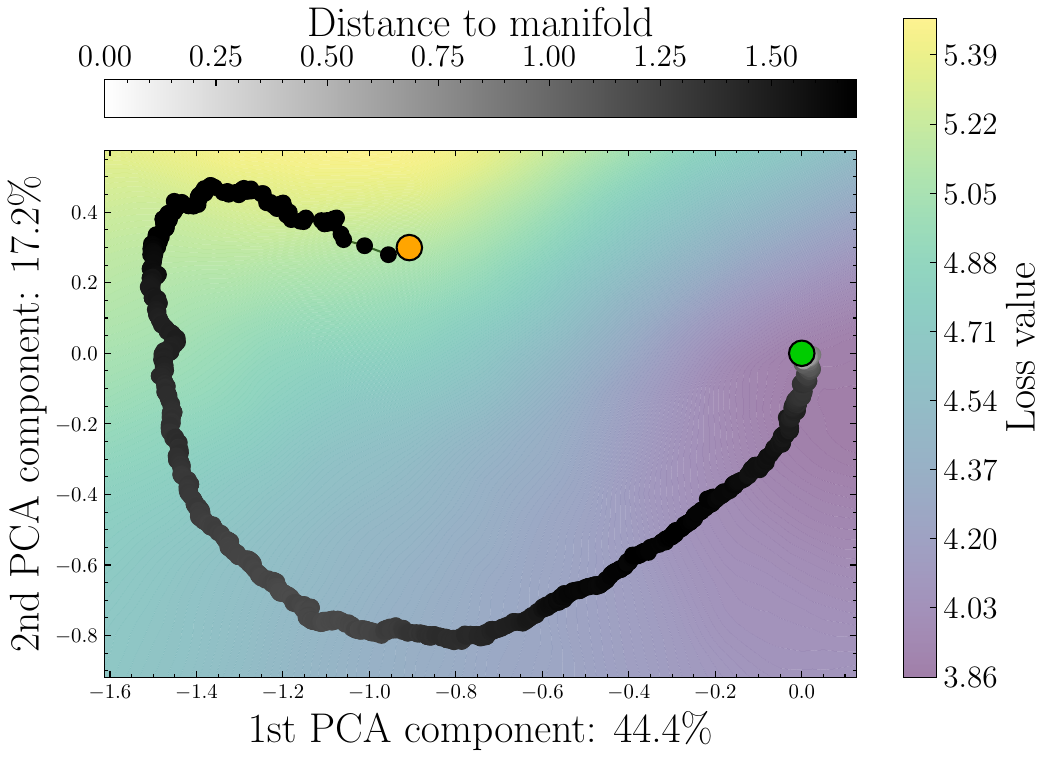}%
\caption{CBO, targeted}
\end{subfigure}
\hfill%
\begin{subfigure}{.45\textwidth}
\includegraphics[width=\textwidth]{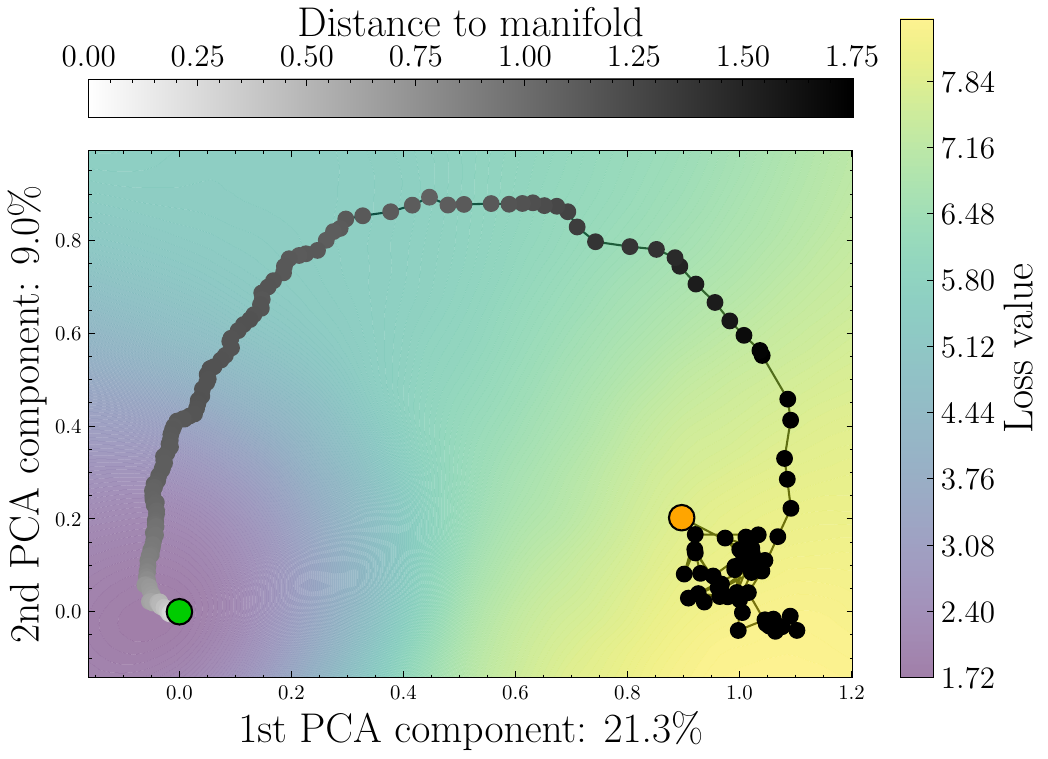}%
\caption{CH, targeted}
\end{subfigure}
\caption{Evolution of the consensus point for CBO and CH. Here we consider the untargeted and targeted low resolution attack on the image in \cref{fig:lowres-ex}. The dynamic is visualized via a PCA as described in \cref{sec:tarfail}, following the concepts of \cite{li2018visualizing}.}\label{fig:pca}
\end{figure}
In \cref{fig:pca} we first observe that for the untargeted attack CBO achieves convergence within few iterates, following a comparably smooth path. Furthermore, the first two principal components explain $75.2\%$ of the variance, which means that the consensus point mainly moves along a low-dimensional manifold. Compared to that, in CH the first two components only explain $18.5\%$ of the variance and the path is less direct. In this case, CBO is significantly more efficient by essentially moving along a low-dimensional path.

For the untargeted attack, we again see that the first two components are more important in CBO than in CH. However, the path in CBO is less direct and also the difference between consecutive iterates in the PCA basis is less. For CH, we observe an oscillatory behavior at the start of the iteration. This can be attributed to the large initial step size that is then annealed with the scheduling described in \cref{alg:CHNES}.

\subsection{Further experiments on $P$-pixel attacks}

In \cref{fig:numpix} we evaluate the attack performance when varying the number of attacked pixels. As expected, the success rate increases with the number of pixels. 

Furthermore, in \cref{fig:onepix}, we plot the optimization landscape in the coordinate variable and display a single run of CBO.
\begin{figure}
\begin{subfigure}[t]{.53\textwidth}
\begin{filecontents*}{numpix.csv}
n p
1 11.0
5 19.0
10 21.0
25 21.0
50 25.0
100 30.0
150 37.0
200 42.0
\end{filecontents*}
\centering
\begin{tikzpicture}
\begin{axis}[tinyticks, legend pos=north east,
clip bounding box=upper bound,
ytick={0, 10, 20, 30, 40, 50},
yticklabels ={0$\%$, 10$\%$,  20$\%$, 30$\%$, 40$\%$},
ymin=0, 
xmin=-2,
xmax=205,
xtick=data,
xlabel = Number of pixels $P$,
ylabel= Success rate $\uparrow$,
width=\textwidth,height=.2\textheight]
\addplot table [x=n, y=p, col sep=space] {numpix.csv};\label{rate}
\end{axis}
\end{tikzpicture}
\caption{Success rates for different numbers of $P$ for the $P$-pixel attack. We use CBO to attack $100$ images using the AlexNet architecture with a maximum budget of $Q=10,000$ queries.}
\label{fig:numpix}
\end{subfigure}%
\hfill%
\begin{subfigure}[t]{.35\textwidth}
\centering%
\includegraphics[width=.9\textwidth]{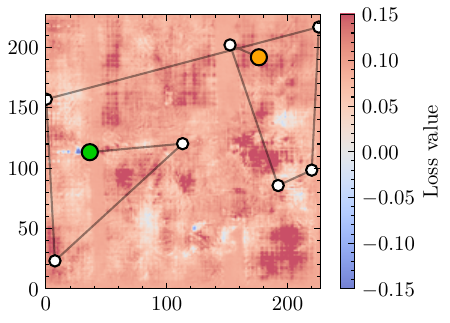}
\caption{
Loss landscape of the pixel position for a $1$-pixel attack.
}\label{fig:onepix}
\end{subfigure}
\caption{Success rate for the $P$-pixel attack for different $P$ and loss landscape of the coordinate variable. For \cref{fig:onepix}, we compute an attack with CBO and then fix the channel values of the last iterates for the visualization. On top of that, we show the evolution of the consensus point, starting at the orange point and ending at the green one.}
\end{figure}
\subsection{Further experiments on the alternative noise model}\label{sec:appsquare}

In \cref{sec:dct} and \cref{sec:square} we explore the idea of adapting the noise model in the CBO algorithm. In \cref{alg:CBO}, the standard Gaussian noise is replaced by a function motivated by the respective attack space. This is a significant modification of the original CBO model and does not directly allow for the standard convergence analysis and mean field interpretation. Beyond that, the question arises, whether this modification makes the consensus dynamic obsolete. I.e., it might be that each particle performs a random search on its own. In order to examine this hypothesis and whether the drift towards the consensus point is beneficial for the algorithm we study the effect of the drift parameter $\lambda$ in \cref{alg:CBO}, with the noise model of \cref{alg:squarenoise}. Setting $\lambda=0.$, corresponds to no interaction between the particles, i.e., a random walk. In particular, note, that in the setup of \cref{alg:squarenoise} the noise is not scaled by the distance to the consensus point. In \cref{fig:lamdadrift} we observe that choosing $\lambda>0$ (by default we have $\lambda=1.$) does in fact yield better results.

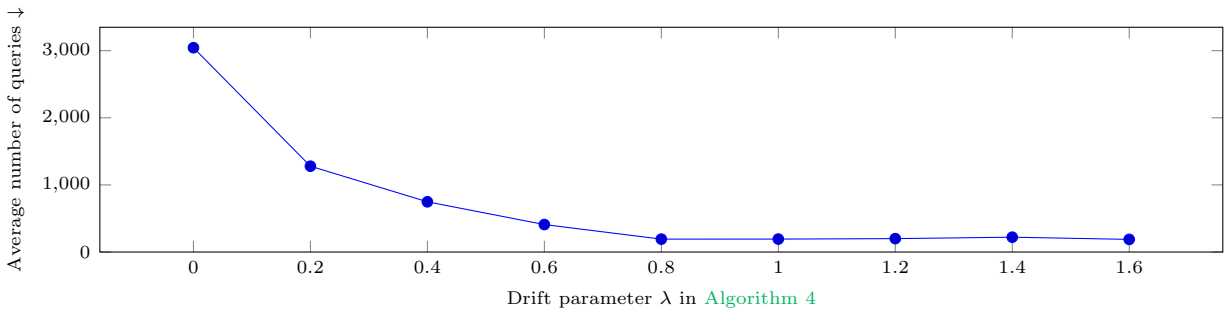
\begin{figure}
\begin{filecontents*}{lamda.csv}
n p
0.0 3044.6
0.2 1278.8
0.4 748.2
0.6 409.4
0.8 192.2
1.0 192.8
1.2 199.6
1.4 221.0
1.6 188.4
\end{filecontents*}
\centering
\begin{tikzpicture}
\begin{axis}[tinyticks, legend pos=north east,
clip bounding box=upper bound,
ymin=0, 
%xmin=,
%xmax=205,
xtick=data,
xlabel = Drift parameter $\lambda$ in \cref{alg:CBO},
ylabel= Average number of queries $\downarrow$,
width=\textwidth,height=.2\textheight]
\addplot table [x=n, y=p, col sep=space] {lamda.csv};\label{rate}
\end{axis}
\end{tikzpicture}
\caption{We perform $50$ untargeted attacks on ImageNet employing (I), in the square attack scenario of \cref{sec:square}. The optimizer here is CBO with the square noise, as described in \cref{alg:squarenoise}. We vary the drift parameter $\lambda$ and show the average number of queries over all attacks, with a query limit of $Q=10,000$.}
\label{fig:lamdadrift}
\end{figure}%
Moreover, especially for the square attack space, one might ask if the computation of the consensus point with this noise model is even meaningful. In CBO one assumes that computing a weighted average of the existing particles can result in point that has a better objective value. But with the particles produced with, \cref{alg:squarenoise} this is not clear a priori. Compared to that the algorithm proposed in \cite{ACFH2020square} does not consider averages over different squares.

To examine this, we perform the following experiment: we initialize $5$ particles with the strategy described in \cref{sec:square}. Then we compute the consensus point $c$ for different values of $\alpha$ and plot the loss value of $\ell(f(c),y)$ in \cref{fig:squarealpha} together with the corresponding consensus point. For lower values of $\alpha$, we observe that the averaging makes the squares less visible. In particular, the squares do not use the full $\ell^\infty$-budget $\budget$. When increasing $\alpha$, the consensus point tends toward to $\argmin$ over the particles. This results in a better loss value.

\begin{filecontents*}{squarea.csv}
n p
0.0 2.8287720680236816
0.5 2.815850257873535
1.0 2.8142032623291016
1.5 2.812264919281006
2.0 2.8116331100463867
\end{filecontents*}
\begin{figure}
\centering%
\begin{tikzpicture}
\begin{axis}[width=\textwidth,
height=.5\textwidth,grid=major,
legend pos=outer north east,
legend cell align={left}, xlabel=$\alpha$,
ylabel=Loss value $\downarrow$,
axis x line*=bottom,
axis y line*=left,
x label style={anchor=west},
y label style={anchor=south},
%axis equal,
xmin=-0.5,
name=aa
]
%
% \addplot[very thick,color=blue, domain= 2.1:3,Straight Barb-] {-3.2*(x-3)^2+5};
%
\addplot[line width=1.5pt, color=blue,opacity=1., mark=*,mark options={solid}]  table[x index=0, y index=1]{squarea.csv};
%\addlegendentry{\cref{eq:normconst}, $\tau=0.2$}
\label{e1}
\tikzstyle{every pin}=[
    draw=blue,
    fill=white,
    thick
]
\tikzset{every pin edge/.style={draw=blue, thick}}
\node [
coordinate,
pin={[pin distance=2cm]below:{\includegraphics[width=1.5cm]{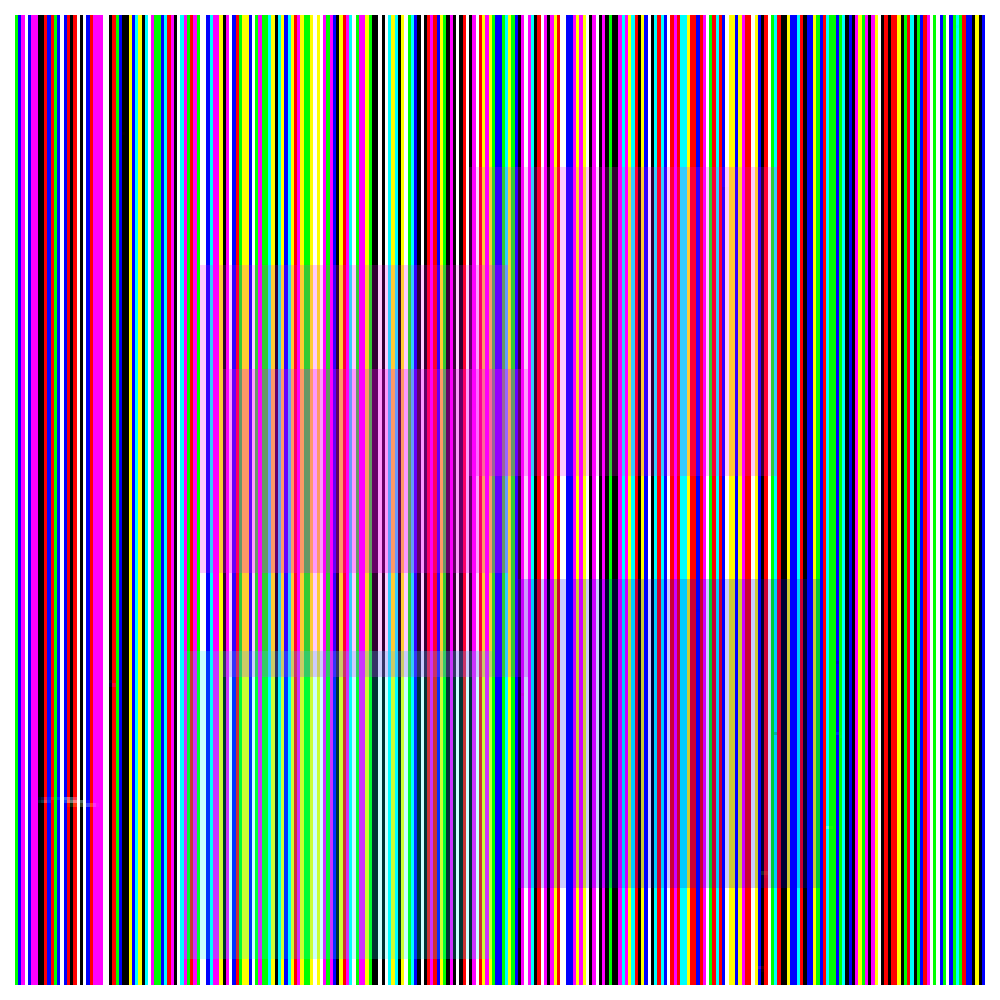}}},
] at (axis cs:0.0,2.8287720680236816)   {};
\node [
coordinate,
pin={[pin distance=2cm]above:{\includegraphics[width=1.5cm]{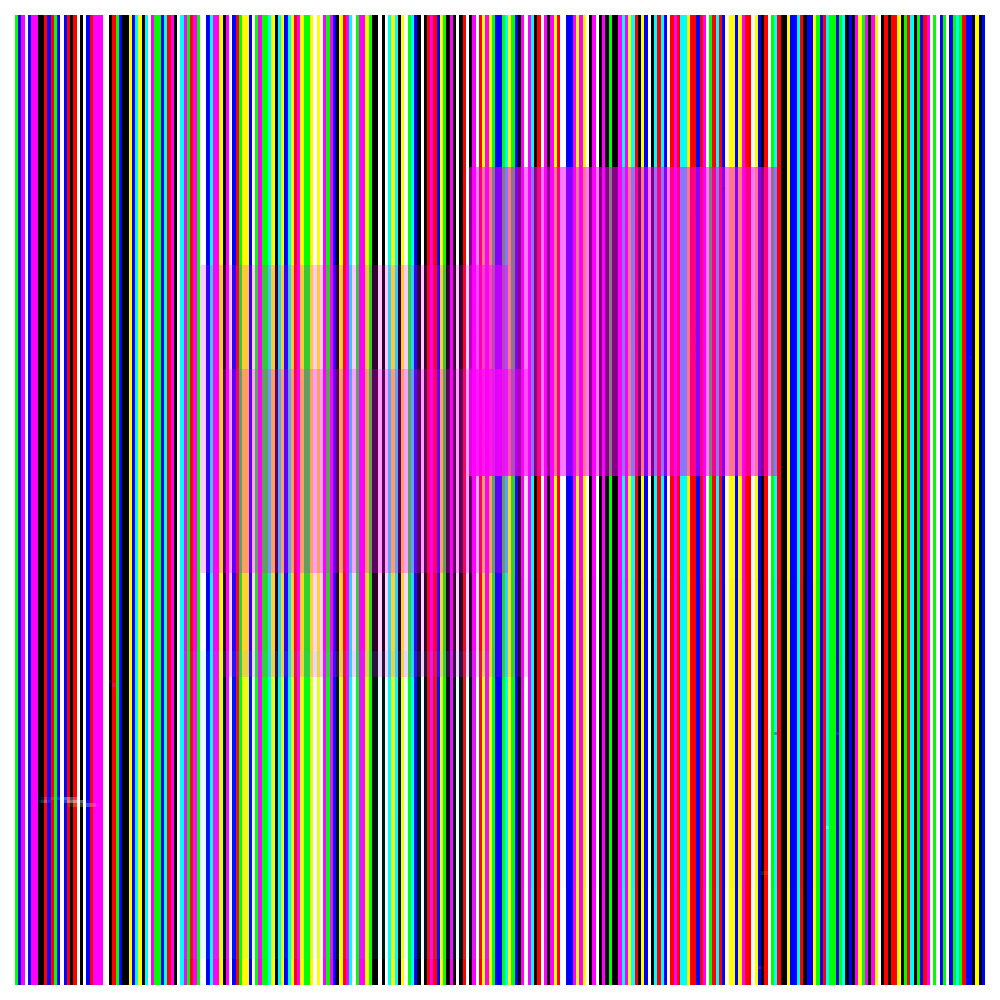}}},
] at (axis cs:0.5,2.815850257873535)   {};
\node [
coordinate,
pin={[pin distance=2cm]above:{\includegraphics[width=1.5cm]{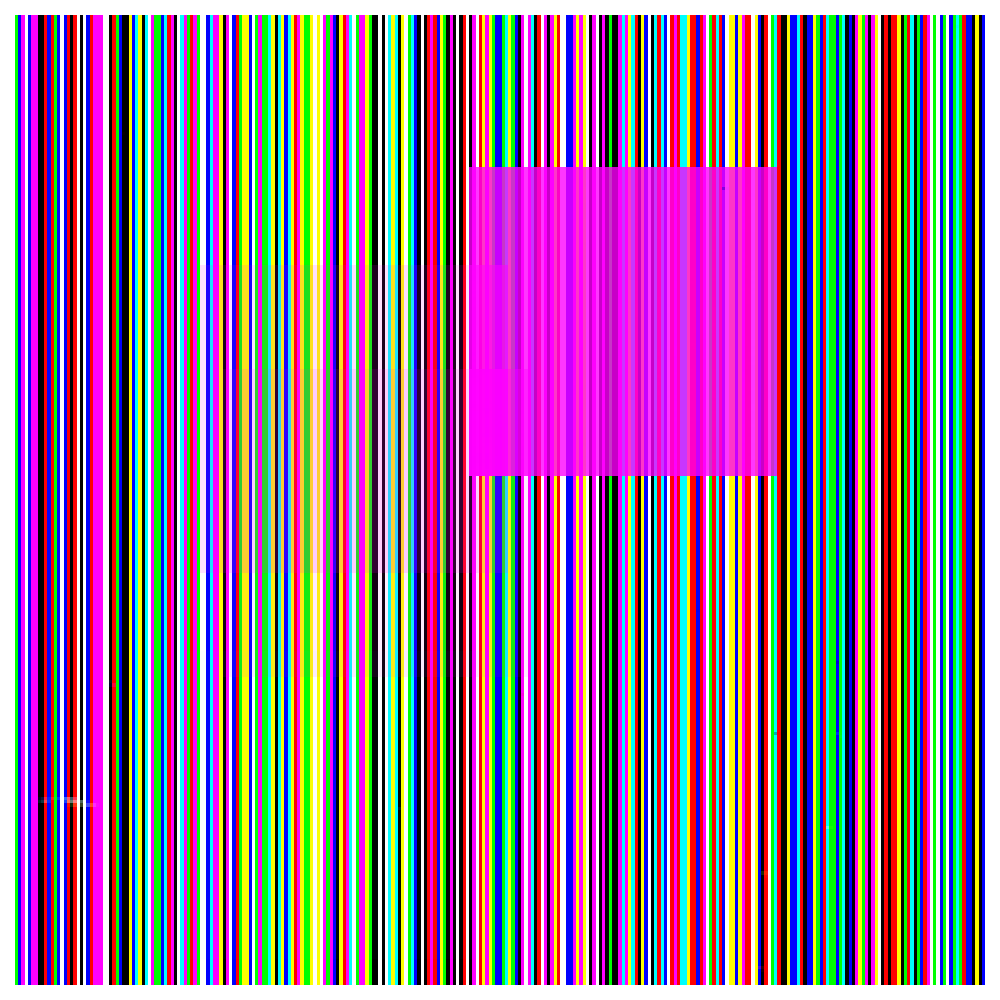}}},
] at (axis cs:1.0,2.8142032623291016)   {};
\node [
coordinate,
pin={[pin distance=2cm]above:{\includegraphics[width=1.5cm]{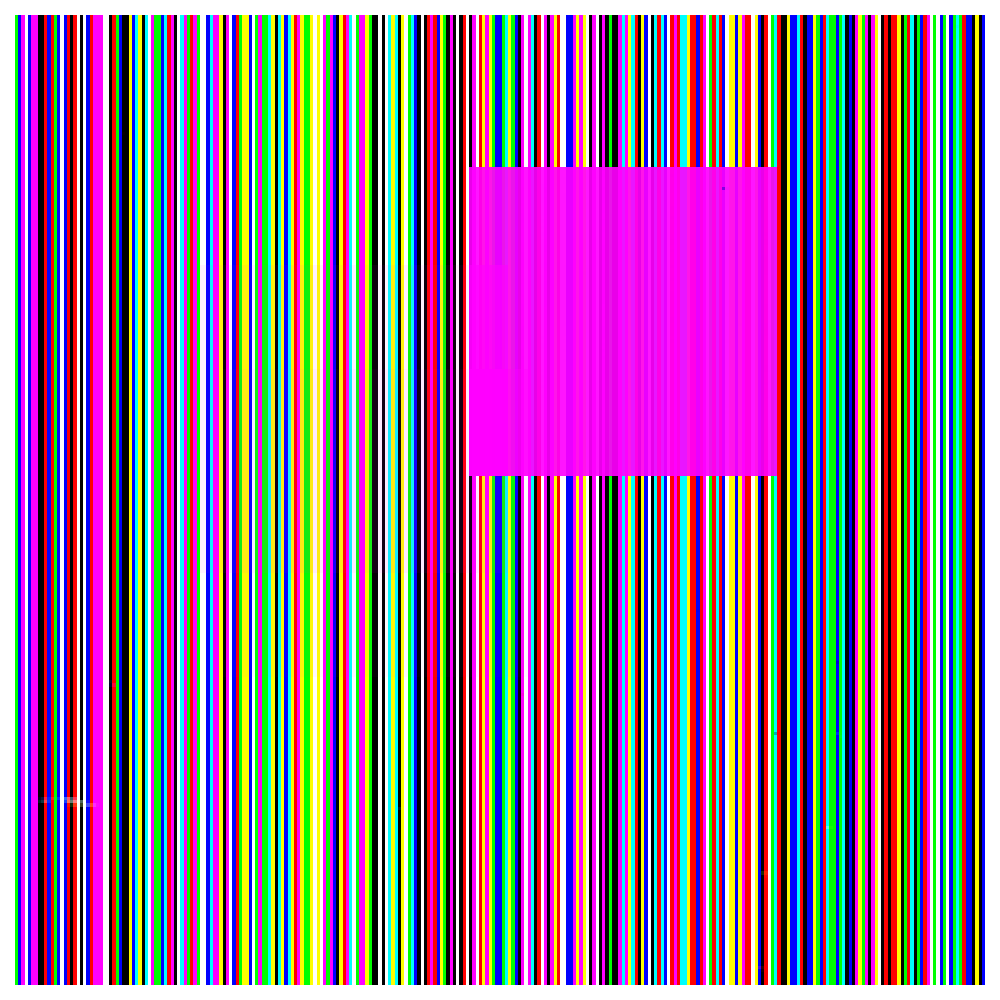}}},
] at (axis cs:1.5,2.812264919281006)   {};
\node [
coordinate,
pin={[pin distance=2cm]above:{\includegraphics[width=1.5cm]{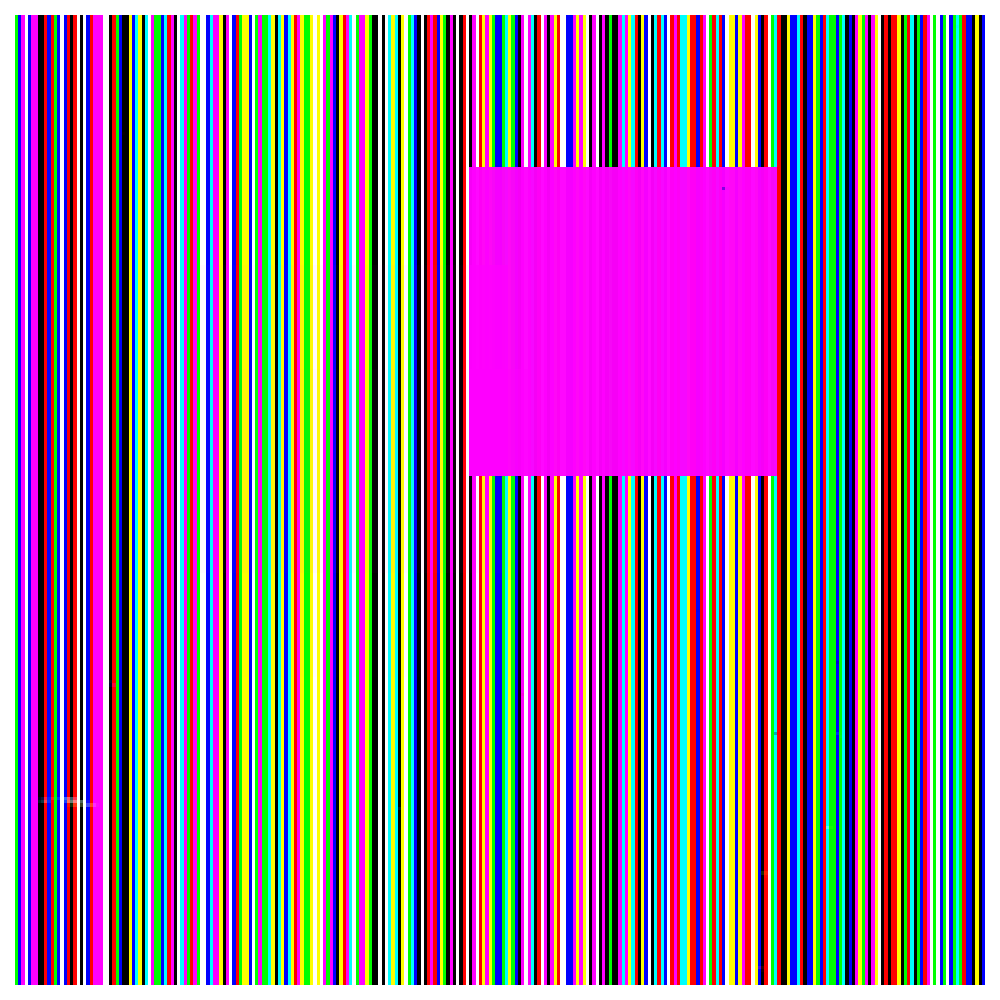}}},
] at (axis cs:2.0,2.8116331100463867)   {};
\end{axis}
\end{tikzpicture}
\caption{For the square noise as described in \cref{sec:square} a larger $\alpha$ improves the attack strength.}\label{fig:squarealpha}
\end{figure}

\end{document}